\theoremstyle{plain}
\newtheorem{axiom}{Axiom}
\newtheorem{claim}[axiom]{Claim}
\newtheorem{theorem}{Theorem}[section]
\newtheorem{lemma}[theorem]{Lemma}
\newtheorem{corr}[theorem]{Corollary}
\newtheorem{prop}[theorem]{Proposition}
\theoremstyle{remark}
\newtheorem{definition}[theorem]{Definition}
\newlength{\sizeofpage} 
\def\DMO{\DeclareMathOperator}
\DMO{\GL}{GL}
\DMO{\SL}{SL}
\DMO{\SO}{SO}
\DMO{\SU}{SU}
\DMO{\ad}{ad}
\DMO{\Ad}{Ad}
\DMO{\id}{id}
\DMO{\der}{Der}
\DMO{\gl}{\mathfrak{gl}}
\DMO{\su}{\mathfrak{su}}
\renewcommand{\P}{\mathbb{P}}
\newcommand{\Z}{Z}
\newcommand{\Zl }{\Z(\{\lambda\})}
\newcommand{\eps}{\varepsilon}
\newcommand{\lp}{{\lambda^+}}
\newcommand{\lm}{{\lambda^-}}
\newcommand{\lone}{\lambda_1}
\newcommand{\Rl}{R_\lambda}
\newcommand{\Cl}{C_\lambda}
\newcommand{\hl}{h_\lambda}
\newcommand{\fl}{f_\lambda}
\newcommand{\lnn}{\lambda\vdash n}
\def\lh{\bm\lambda}
\def\intvl{\left[\frac{\mt \ell}{\log n}, \frac{\Mt \ell}{\log n}\right]}
\newcommand{\Mt}{6}
\newcommand{\mt}{0.4}
\def\ll{\ell(\lambda)}
\def\tl{{\tilde \lambda}}
\def\H{\mathcal H}
\newcommand\contentsetm[1]{\ensuremath{\mathcal X^{#1}_m}}
\newcommand\firstsetm[1]{\ensuremath{\mathcal Y^{#1}_m}}
\begin{document}

\begin{frontmatter}
\title{Random partitions under the Plancherel--Hurwitz measure, \\ high genus Hurwitz numbers and maps}

\runtitle{Random partitions under the Plancherel--Hurwitz measure}

\begin{aug}

\author[A]{\fnms{Guillaume}~\snm{Chapuy}\ead[label=e1]{guillaume.chapuy@irif.fr}},
\author[B]{\fnms{Baptiste}~\snm{Louf}\ead[label=e2]{baptiste.louf@math.uu.se}}
\and
\author[A,C]{\fnms{Harriet}~\snm{Walsh}\ead[label=e3]{harriet.walsh@ens-lyon.fr}}

\runauthor{G. Chapuy, B. Louf and H. Walsh}

\address[A]{Université Paris Cité, IRIF, CNRS, F-75013 Paris, France \printead[presep={,\ }]{e1}}

\address[B]{Department of Mathematics, Uppsala University, PO Box 480, SE-751 06 Uppsala, Sweden \printead[presep={,\ }]{e2}}

\address[C]{Univ Lyon, ENS de Lyon, CNRS, Laboratoire de Physique, F-69342 Lyon, France\printead[presep={,\ }]{e3}}
\end{aug}

\begin{abstract}

We study the asymptotic behaviour of random integer partitions under a new probability law that we introduce, the Plancherel--Hurwitz measure. This distribution, which has a natural definition in terms of Young tableaux, is a  deformation of the classical Plancherel measure which appears naturally in the context of Hurwitz numbers, enumerating certain transposition factorisations in symmetric groups.

We study a regime in which the number of factors in the underlying factorisations grows linearly with the order of the group, and the corresponding topological objects, Hurwitz maps, are of high genus. We prove that the limiting behaviour exhibits a new, twofold, phenomenon: the first part becomes very large, while the rest of the partition has the standard Vershik--Kerov--Logan--Shepp limit shape. As a consequence, we obtain asymptotic estimates for unconnected Hurwitz numbers with linear Euler characteristic, which we use to study random Hurwitz maps in this regime. This result can also be interpreted as the return probability of the transposition random walk on the symmetric group after linearly many steps.
\end{abstract}
\begin{keyword}[class=MSC]
\kwd[Primary ]{60C05}
\kwd[; secondary ]{60B15}
\kwd{60D05}
\kwd{05A16}
\kwd{05A05}
\kwd{05A17}
\end{keyword}

\begin{keyword}
\kwd{Random partitions}
\kwd{limit shapes}
\kwd{transposition factorisations}
\kwd{map enumeration}
\kwd{Hurwitz numbers}
\kwd{RSK algorithm}
\kwd{giant components} \\
This project has received funding from the European Research Council (ERC) under
the European Union’s Horizon 2020 research and innovation programme (grant agreement No.
ERC-2016-STG 716083 “CombiTop”) 
\end{keyword}

\end{frontmatter}

\section{Introduction and main results}

\subsection{Random partitions, Plancherel and Plancherel--Hurwitz measures}

Let $n\geq 1$ be an integer and let $\mathfrak{S}_n$ denote the group of permutations on $[n]:=\{1,2,\dots,n\}$. The famous \emph{Robinson--Schensted algorithm}~\cite{Robinson_1938,Schensted_1961} (see e.g.~\cite{Stanley:EC2}; we abbreviate it as \emph{RSK}, referring to the generalisation by Knuth~\cite{Knuth_1970}) associates each permutation $\sigma\in \mathfrak{S}_n$ bijectively to a pair $(P,Q)$ of standard Young tableaux (SYT) of the same shape. It is impossible to overstate the importance of this construction in enumerative and algebraic combinatorics. At the enumerative level, the RSK algorithm gives a bijective proof of the following  identity:
\begin{align}\label{eq:RSK}
	\sum_{\lambda \vdash n} {f_\lambda}^2 =n!,
\end{align}
where the sum is taken over integer partitions of $n$, and where $f_\lambda$ is the number of SYT of shape $\lambda$ (see Section~\ref{sec:definitions} for full definitions).
If the permutation $\sigma$ is chosen uniformly at random, the shape $\lambda$ of the associated tableaux is a random partition of $n$ distributed according to the probability measure
\begin{equation}
\lambda \mapsto \frac{1}{n!} {f_\lambda}^2,
\end{equation}
which is the \emph{Plancherel measure} of the symmetric group $\mathfrak{S}_n$.

The study of random partitions under the Plancherel measure is an immense subject in itself with many ramifications. One of the classical and most famous results is the fact, due independently to Logan and Shepp~\cite{Logan_Shepp_1977} and Vershik and Kerov~\cite{Vershik_Kerov_1977}, that when $n$ goes to infinity, the diagram of a Plancherel distributed partition $\lh$ converges in some precise sense to a deterministic limit shape (Theorem~\ref{thm:vkls} below) that we call the \emph{VKLS limit shape} following these authors' initials. Other profound results deal with the behaviour of the largest part $\lh_1$, which coincides with the \emph{longest increasing subsequence} of the random permutation $\sigma$, which scales as $2\sqrt{n}$ with fluctuations of order $n^{1/6}$ driven by a Tracy--Widom distribution~\cite{Baik_Deift_Johansson_1999}. 
The book by Romik~\cite{Romik_2015} is a delightful introduction to the subject.

\begin{figure}
    \begin{center}
{\def\svgwidth{0.8\sizeofpage}
\begingroup%
  \makeatletter%
  \providecommand\color[2][]{%
    \errmessage{(Inkscape) Color is used for the text in Inkscape, but the package 'color.sty' is not loaded}%
    \renewcommand\color[2][]{}%
  }%
  \providecommand\transparent[1]{%
    \errmessage{(Inkscape) Transparency is used (non-zero) for the text in Inkscape, but the package 'transparent.sty' is not loaded}%
    \renewcommand\transparent[1]{}%
  }%
  \providecommand\rotatebox[2]{#2}%
  \newcommand*\fsize{\dimexpr\f@size pt\relax}%
  \newcommand*\lineheight[1]{\fontsize{\fsize}{#1\fsize}\selectfont}%
  \ifx\svgwidth\undefined%
    \setlength{\unitlength}{285.61994045bp}%
    \ifx\svgscale\undefined%
      \relax%
    \else%
      \setlength{\unitlength}{\unitlength * \real{\svgscale}}%
    \fi%
  \else%
    \setlength{\unitlength}{\svgwidth}%
  \fi%
  \global\let\svgwidth\undefined%
  \global\let\svgscale\undefined%
  \makeatother%
  \begin{picture}(1,0.64016584)%
    \lineheight{1}%
    \setlength\tabcolsep{0pt}%
    \put(0,0){\includegraphics[width=\unitlength,page=1]{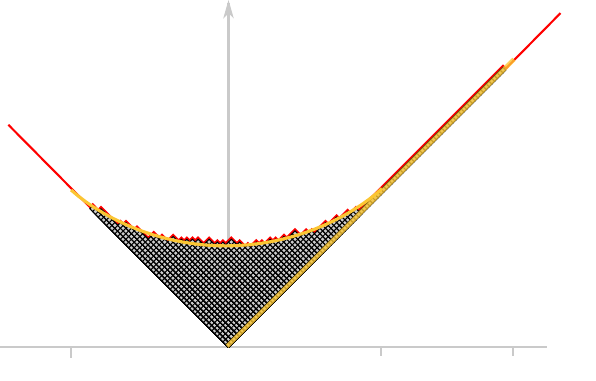}}%
    \put(0.08369219,0.00582614){\makebox(0,0)[lt]{\lineheight{1.25}\smash{\begin{tabular}[t]{l}{\scriptsize $-2\sqrt{{n}}$}\end{tabular}}}}%
    \put(0.60500994,0.0042713){\makebox(0,0)[lt]{\lineheight{1.25}\smash{\begin{tabular}[t]{l}{\scriptsize $2\sqrt{{n}}$}\end{tabular}}}}%
    \put(0.83048682,0.00629707){\makebox(0,0)[lt]{\lineheight{1.25}\smash{\begin{tabular}[t]{l}{\scriptsize $\frac{2\ell}{\log n}$}\end{tabular}}}}%
    \put(0,0){\includegraphics[width=\unitlength,page=2]{highgenus-sim-aofa-2.pdf}}%
  \end{picture}%
\endgroup%
}\end{center}
    \caption{A random partition $\boldsymbol{\lambda}$ of $n = 2500$ under the Plancherel--Hurwitz measure $\P_{n,\ell}^+$ in the high genus regime $\ell = 2 \lfloor 1.5 n \rfloor$ (sampled via a Metropolis--Hastings algorithm). The twofold asymptotic behaviour is shown in yellow: the first part $\boldsymbol{\lambda}_1$ is asymptotic to $\frac{2\ell}{\log(n)}$ and escapes the picture, while the rest of the partition $\tilde{\boldsymbol{\lambda}} = (\boldsymbol{\lambda}_2,\boldsymbol{\lambda}_3,\ldots)$ 
    has a VKLS limit shape at a scale of $\sqrt{n}$.
    See Theorem~\ref{thm:limitshape}. The profile of the partition is shown in red.
    }
    \label{fig:random_partition_hg_ls}\label{fig:main}
    
\end{figure}

In this paper we will be interested in a generalisation of this measure, motivated by the study of Hurwitz numbers and branched coverings of the sphere, which are realized combinatorially by transposition factorisations or equivalently by certain combinatorial objects called Hurwitz maps, discussed in Section~\ref{sec:maps_intro}. We will study the asymptotic behaviour of this measure in a regime related to ``high genus'' surfaces, and show that this behaviour generalises the VKLS one in a novel way, see Figure~\ref{fig:main}.
For an even integer $\ell\geq 0$, we let $H_{n,\ell}$ be the number of factorisations of the identity of $\mathfrak{S}_n$ into $\ell$ transpositions:
\begin{equation}\label{eq:defHnl}
	H_{n,\ell} = \#\{ (\tau_1,\tau_2,\dots,\tau_\ell) \in (\mathfrak{S}_n)^\ell, \tau_1  \cdot \tau_2 \cdots \tau_\ell = \id , \mbox{ each } \tau_i 
	\text{ is a transposition} \}. 
\end{equation}
By a classical correspondence between transposition factorisations and branched coverings of the sphere going back to Hurwitz himself (\cite{Hurwitz_1891}, see also e.g.~\cite{LZ}), the number $H_{n,\ell}$ enumerates degree $n$ coverings of the Riemann sphere with $\ell$ numbered, simple, ramification points, by an oriented surface (connected or not). The number $H_{n,\ell}$ is called an \emph{unconnected Hurwitz number} in the enumerative geometry literature.
The \emph{Frobenius formula} from the representation theory of finite groups (see e.g.~\cite{Fulton_1996}) together with the combinatorial representation theory of $\mathfrak{S}_n$ provides an explicit expression of this number as a sum over partitions, vastly generalising~\eqref{eq:RSK} (which corresponds to  $\ell=0$). Indeed,
\begin{align}\label{eq:Frobenius}
H_{n,\ell} = 
	\frac{1}{n!}\sum_{\lambda \vdash n}{f_\lambda}^2  {C_\lambda}^\ell, 
\end{align}
where $C_\lambda$ is a combinatorial quantity, namely the sum of contents of all boxes of the partition $\lambda$ (see Section~\ref{sec:definitions}, or Figure~\ref{fig:young_diagrams2} directly), which can be expressed explicitly,
for a partition $\lambda=(\lambda_1,\lambda_2,\dots,\lambda_{\ell(\lambda)})$, as
\begin{equation} 
C_\lambda =  \sum_{i=1}^{\ell(\lambda)} \frac{\lambda_i (\lambda_i - 2i + 1) }{2}.
\end{equation}

The right hand side of the Frobenius formula~\eqref{eq:Frobenius} naturally gives rise to a certain measure on partitions, which is our main object of study:
\begin{definition}[Main object]\label{def:hurwitz_measure}
For $n\in \mathbb Z_{>0}$, $\ell \in 2 \mathbb Z_{\geq 0}$, the Plancherel--Hurwitz measure is the probability measure on partitions of $n$ defined by 
\begin{equation}
	\P_{n,\ell} (\lambda) := \frac{1}{ n! H_{n,\ell}} {f_\lambda}^2 {C_\lambda}^\ell.
\end{equation}
For $\ell>0$, the positive half of the Plancherel--Hurwitz measure is the probability measure on partitions of $n$ with positive content sum
\begin{equation}\P_{n,\ell}^+(\lambda) := \P_{n,\ell}(\lambda | C_\lambda > 0) = 2\cdot  \mathbf{1}_{C_\lambda>0} \cdot \P_{n,\ell}(\lambda)
.\end{equation}
\end{definition}
A partition distributed under $\P_{n,\ell}$ for $\ell>0$ can be thought of as a partition distributed under $\P_{n,\ell}^+$ whose Young diagram is reflected about a vertical axis with probability $\frac{1}{2}$.

When $\ell=0$ the measure $\P_{n,\ell}$ is nothing but the Plancherel measure.
Our main result deals instead with the case where $\ell$ grows linearly with $n$
(which we could call the \emph{high-genus regime} after the discussion of Section~\ref{sec:maps_intro} below).
The \emph{rescaled profile} $\psi_\lambda$ of a partition $\lambda$ is the piecewise linear function that encodes the countour of the tilted diagram of the partition, with coordinate axes scaled by a factor of $1/\sqrt{n}$, as shown in Figure~\ref{fig:main} -- see also Section~\ref{sec:definitions} for a formal definition or Figure~\ref{fig:young_diagrams1}. We have the following theorem (we denote random partitions with bold letters and fixed ones with plain letters throughout):

\begin{theorem}[Main result]\label{thm:limitshape}
Fix $\theta>0$ and let $\lh \vdash n $ be a random partition under the Plancherel--Hurwitz measure $\P^+_{n,\ell}$ in the	regime given by $\ell =\ell(n) \sim 2\theta n$. Then, as $n \to \infty$: 
\begin{longlist}[(i)]
 \item \label{thm:lst_first_part}the first part $\lh_1$ is equivalent to $\frac{2 \ell }{\log n}$ in probability, i.e. $\lh_1\left(\frac{2 \ell }{\log n}\right)^{-1}\xrightarrow{p}1$;
 \item \label{thm:lst_vkls_bulk}the rest of the partition $ \tilde{\lh} = (\lh_2, \lh_3, \ldots) $ has a VKLS limit shape. Namely, 
 \begin{align} 
 \sup_x| \psi_{\tilde\lh}(x) - \Omega(x)| \xrightarrow{p} 0,
 \label{eq:plancherel_limit_profile}
	 \mbox{ ~  with \;}\Omega(x) = \begin{cases}
 \frac{2}{\pi} \big(\arcsin \frac{x}{2} + \sqrt{4 - x^2}\big), &~ |x| \leq 2, \\
|x|, &~ |x|>2,
 \end{cases} 
 \end{align}
 where $\psi_{\tilde{\lh}} (x)$ is the rescaled profile of $\tilde{\lh}$;
 \item\label{thm:lst_2_sqrtn} we have $\frac{\lh_2}{\sqrt n}\xrightarrow{p}2$ and $\frac{\ell(\lh)}{\sqrt n}\xrightarrow{p}2$.
 \end{longlist} 
\end{theorem}

We could include  $\lh_1$ in the partition $\tilde{\lh}$ of~\eqref{eq:plancherel_limit_profile}, since the supremum norm in this statement is insensitive to a small number of large parts. However, from part~(\ref{thm:lst_2_sqrtn}) of the theorem, $\lh_1$ is the only part  not scaling as $\sqrt{n}$ so we find this formulation more natural. This limiting curve is graphed, along with the Young diagram of a finite sized partition sampled under a Plancherel--Hurwitz measure, in Figure~\ref{fig:main}.

Heuristically, we might think of this theorem as resulting from the two competing ``forces'' driving a random partition $\lh$ under  $\P^+_{n,\ell}$. On the one hand, due to the Plancherel factor ${f_{\lambda}}^2$, the classical VKLS theorem shows that there is a cost for the partition to deviate from the VKLS shape. On the other hand, the partition may prefer to deviate strongly from the VKLS shape if it gains a sufficiently high content-sum so that the factor ${C_{\lambda}}^\ell$ compensates the Plancherel loss.

Our result shows that a random partition balances these forces by obtaining a large content-sum exclusively from the first part $\lh_1$, and then leaving the rest of the partition to maximize only the Plancherel entropy. The different length scales determining the limit behaviour are a rather unique feature of this measure and its ``high genus'' regime; established generalisations of the Plancherel measure such as Schur measures and Schur processes do not exhibit such behaviour in typical asymptotic regimes~\cite{Okounkov_2001,Okounkov_Reshetikhin_2003}.

\subsection{High genus maps, Hurwitz numbers, and random walks} \label{sec:maps_intro}

\begin{figure}
\centering
{\def\svgwidth{\sizeofpage}
\input{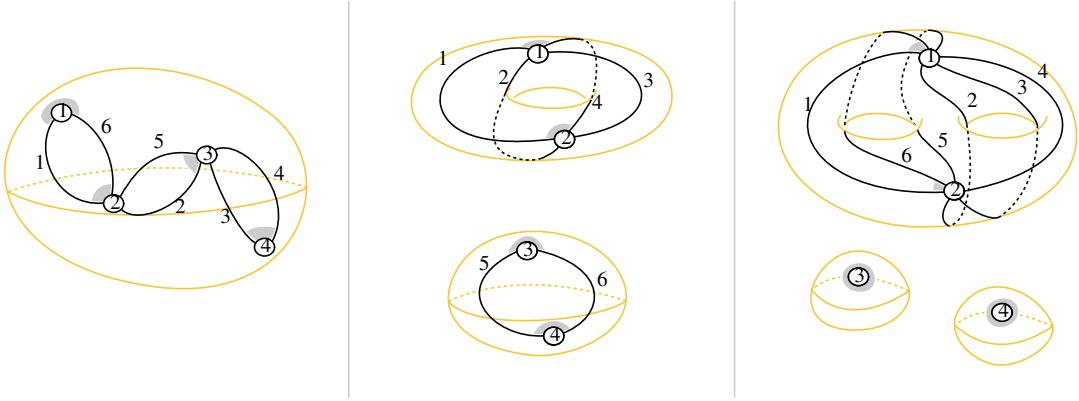} 
}
\caption{Three pure Hurwitz maps, each with 4 vertices, 6 edges and Euler characteristic $\chi = 2$. Left, the  map corresponding to $(12)(23)(34)(34)(23)(12) = \id$ is connected and has genus 0; center, the map corresponding to $(12)^4(34)^2 = \id$ has two connected components, of genus 1 and 0; right, the map corresponding to $(12)^6 = \id$ has three components, of genus 2, 0 and 0. } \label{fig:unconnected_maps}
\end{figure}

Our original motivation to study the Plancherel--Hurwitz measure comes from the field of enumerative geometry and map enumeration.
A \emph{map} is a multigraph embedded on a compact oriented surface with simply connected faces, considered up to homeomorphisms. Equivalently it can be seen as a surface which is discretized by a finite number of polygons. Since the pioneering works of Tutte on planar maps (e.g.~\cite{Tutte_1963}) the enumeration of maps has proven to be particularly interesting, borrowing tools from physics, algebra and geometry and revealing their connections within combinatorics. These tools include matrix integral generating functions discovered by treating maps as Feynman diagrams~\cite{BIPZ_1978, LZ}, the topological recursion~\cite{Eynard_Orantin_2007}, recurrence formulas based on integrable hierarchies~\cite{Goulden_Jackson_2008, Carrell_Chapuy_2014}, classical generating functions (e.g.~\cite{Bender_Canfield_1986,BousquetMelouJehanne}) and bijective combinatorics~(e.g.~\cite{Schaeffer:PhD}). Such exact methods have led to the asymptotic enumeration of many types of maps on the plane or on surfaces of fixed genus $g$, which notably exhibit a universal counting exponent of $\displaystyle{\tfrac{5}{2}(g-1)}$ (e.g.~\cite{Chapuy:CPC2009}). 

 These methods do not, however, extend to maps whose genus grows quickly (in particular, linearly) with the number of polygons. 
This ``high genus'' regime, whose study was pioneered in~\cite{AngelChapuyCurienRay, Ray:diameter} in the case of one-face maps, is one of the most recent and exciting frontiers in the field, due the inefficiency of existing generating-function or bijective methods, requiring the development of new tools.

An advance in this direction was made recently by Budzinski and the second author~\cite{Budzinski_Louf_2021, Budzinski_Louf_2020}, who, as a byproduct of their work on the Benjamini--Curien conjecture~\cite{Curien_2016}, showed the following estimate for the number of (connected) triangulations of size $n$ on a surface of genus $g\sim \theta n$, by a combination of algebraic, combinatorial, and probabilistic methods:
\begin{equation} \label{eq:BL}
	T_{n,g} = n^{2g} \exp[c( \theta) n + o(n)], \ \ g\sim \theta n,
\end{equation}
where $c(\theta)>0$ is a known continuous function.
In this paper we will be interested in a different model of maps:
\begin{definition}[Hurwitz map] \label{def:hurwitz_map}
	A Hurwitz map with $n$ vertices and $\ell$ edges is a map on a (not necessarily connected) compact oriented surface, with vertices labelled from $1$ to $n$ and edges labelled from $1$ to $\ell$, which is such that the label of edges around each vertex increase (cyclically) counterclockwise. In such a map each vertex is incident to precisely one corner which is an  edge-label descent. If moreover each face of the map contains precisely one such corner, the Hurwitz map is called \emph{pure}.
\end{definition}
It is classical, and easy to see, that Hurwitz maps of parameters $n$ and $\ell$ are in bijection with tuples of transpositions $(\tau_1,\dots,\tau_\ell)$ in $\mathfrak{S}_n$, while \emph{pure} Hurwitz maps are in bijection with tuples whose product is equal to the identity. The bijection consists of identifying transpositions with edges of the map, and their index with the edge-label, as illustrated in Figure~\ref{fig:unconnected_maps} (this construction is a special case and an adaptation of the classical construction of ``constellations'', see~\cite{Bousquet-Melou_Schaeffer_2000,Duchi_Poulalhon_Schaeffer_2014,Chapuy:CPC2009}). 

In geometric language, pure Hurwitz maps are therefore in bijection with simply ramified, $n$-sheeted, branched covers of the sphere by an orientable surface, with $\ell$ numbered simple ramification points, and trivial ramification above $\infty$, which is the model considered in~\cite{Hurwitz_1891,Okounkov_2000,Dubrovin_Yang_Zagier_2017}.
Pure Hurwitz maps have also been studied from the combinatorial and probabilistic viewpoint, and they are known~\cite{Duchi_Poulalhon_Schaeffer_2014}, in the planar and fixed-genus cases, to belong to the same universality class as other natural models of maps such as triangulations, quadrangulations, etc. (the convergence to Brownian surfaces is only conjectured, but other properties of the universality class such as counting exponents or the existence of bijections are known).

It is important to insist that our surfaces (or maps) are not necessarily connected, which is a significant difference from  most models in the literature. A pure Hurwitz map of parameters $n$ and $\ell$ necessarily has $n$ faces, and its Euler characteristic $\chi$, its number of components $\kappa$, and its generalised genus $G$ (sum of the genera, or number of handles, of each connected component) are related by Euler's formula:
\begin{equation} \label{eq:euler_characteristic}
\chi = \#\text{vertices} - \#\text{edges} + \#\text{faces} = 2n - \ell = 2\kappa - 2G.
\end{equation}
For this reason we call regimes where $\ell\gg 2n$ ``high genus'' regimes. 

By the above correspondence, the number $H_{n,\ell}$ introduced in~\eqref{eq:defHnl} is the number of pure Hurwitz maps with $n$ vertices and $\ell$ edges. 
As a consequence of our analysis of the Plancherel--Hurwitz measure,  we obtain the following estimate
\begin{theorem}[Asymptotics of high genus unconnected Hurwitz numbers]\label{thm:asymptotic_H_nl} 
	As in~\eqref{eq:defHnl}, let $H_{n,\ell}$ be the unconnected Hurwitz number counting not necessarily connected pure Hurwitz maps with $n$ vertices and an even number $\ell= \ell(n) \sim 2\theta n $ of edges. 
	 Then, as $n\to \infty$,
\begin{equation}\label{eq:asymptoticsHnl}
	H_{n,\ell} = \bigg( \frac{\ell}{\log n}\bigg)^{2 \ell} \exp \big[ ( -2+ \log 2) \ell+o(n) \big],
\end{equation}
where the little-o is uniform for $\ell/n$ in any compact subset of $(0,\infty)$.
\end{theorem}

 It is tempting to see this theorem as as strong (for our model) as the Budzinski--Louf estimate~\eqref{eq:BL}, but unfortunately this is not quite the case. The major difference is that our maps are not necessarily connected. Indeed, we will show that even when $\theta \geq 1$ and there are sufficiently many edges to connect all the vertices we are predominantly counting unconnected maps,
in particular that 
\begin{theorem} \label{thm:nogiantcomp}
For all $\theta>1$, with high probability  (w.h.p., that is to say with probability tending to $1$ as $n\to\infty$) a uniformly random pure Hurwitz map with $n$ vertices and an even number $\ell = \ell(n) \sim 2  \theta n $ of edges contains a connected component with at least $\gamma(\theta) \ell$ edges, where $\gamma(\theta) = 2^{2\theta-1}-1$. However, in such a map, the largest connected component has (vertex) size $O_p( n /\log n) $, where $O_p$ denotes a big-O in probability.
\end{theorem}

The fact that the ``giant'' edge-component in the previous theorem has a sublinear number of vertices implies that its genus, viewed as a function of its number of vertices, is superlinear. This seems to rule out the possibility of deducing asymptotics for the \emph{connected} linear-genus regime from our results, at least not without new ideas.

At this point it is worth commenting that in map enumeration, the regime in which the genus is unconstrained, or superlinear, is often much easier to deal with than the linear case. 
In fact, the Plancherel--Hurwitz measure already appears (with no name) in the regime $\ell > \frac{1}{2} n \log n$, in work of Diaconis and Shahshahani on the transposition random walk on $\mathfrak{S}_n$.
They famously showed~\cite{Diaconis_Shahshahani_1981} that when $\ell \geq \frac{1+\epsilon}{2}n \log n$, the walk is asymptotically mixed after $\ell$ steps, and the proof essentially consists in showing that the Plancherel--Hurwitz measure is dominated by the trivial partition $(n)$ in this regime. 
In this context, our result~\eqref{eq:asymptoticsHnl} can also be interpreted as an estimate on the return probability of the random walk after linearly many steps -- much before the cut-off time, at a time when the Plancherel--Hurwitz measure still has a more subtle behaviour than the trivial partition.
\begin{corr}[Equivalent formulation of Theorem~\ref{thm:asymptotic_H_nl}]
	The probability that the transposition random walk on $\mathfrak{S}_n$ returns to the origin after $\ell= \ell(n) \sim 2 \theta n$ steps is equal to
	\begin{align}	
		\binom{n}{2}^{-\ell}H_{n, \ell}
		&= 	\exp \big[-2\ell (\log \log(n) - \log \theta  -2\log 2 + 1) + o(n)\big].
\end{align}
\end{corr}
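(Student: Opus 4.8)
\emph{Proof proposal.} The plan is to deduce this corollary from Theorem~\ref{thm:asymptotic_H_nl} purely by a change of normalisation. First I would justify that $\binom n2^{-\ell}H_{n,\ell}$ is indeed the return probability in question: a trajectory of length $\ell$ of the transposition random walk started at $\id$ is a uniformly random tuple $(\tau_1,\dots,\tau_\ell)$ of transpositions, each such tuple has probability $\binom n2^{-\ell}$, and by~\eqref{eq:defHnl} exactly $H_{n,\ell}$ of them satisfy $\tau_1\cdots\tau_\ell=\id$; hence the return probability equals $\binom n2^{-\ell}H_{n,\ell}$, and it remains to insert the estimate~\eqref{eq:asymptoticsHnl}.

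It then remains to take logarithms and substitute. Using $\log\binom n2 = 2\log n - \log 2 + O(1/n)$ together with the fact that $\ell = O(n)$ (so that $\ell\cdot O(1/n) = o(n)$) and~\eqref{eq:asymptoticsHnl}, one finds
\[
\log\!\Big(\binom n2^{-\ell}H_{n,\ell}\Big) = -2\ell\log n + \ell\log 2 + 2\ell\big(\log\ell - \log\log n\big) + (-2+\log 2)\ell + o(n).
\]
The only genuinely necessary extra input is the cancellation of the $\log n$ terms: from $\ell\sim 2\theta n$ we have $\log\ell = \log n + \log(2\theta) + o(1)$, hence $2\ell\log\ell = 2\ell\log n + 2\ell\log(2\theta) + o(n)$, and the $\pm 2\ell\log n$ contributions cancel. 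Collecting the remaining terms proportional to $\ell$, namely $\log 2$ (from $\log\binom n2$), $2\log 2 + 2\log\theta$ (from $\log\ell$), and $-2 + \log 2$ (from~\eqref{eq:asymptoticsHnl}), alongside the surviving $-2\ell\log\log n$, gives
\[
\log\!\Big(\binom n2^{-\ell}H_{n,\ell}\Big) = -2\ell\log\log n + \ell\big(4\log 2 + 2\log\theta - 2\big) + o(n),
\]
which is exactly $-2\ell\big(\log\log n - \log\theta - 2\log 2 + 1\big) + o(n)$, as claimed.

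There is no real obstacle here: the entire content sits in Theorem~\ref{thm:asymptotic_H_nl}, and the corollary is just its restatement under a different normalisation. The one point that warrants a line of care is that all error terms remain $o(n)$ after the substitution $\ell\sim 2\theta n$; this holds because $\ell$ is linear in $n$, so both $\ell\cdot o(1)$ and $\ell\cdot O(1/n)$ are $o(n)$, and because the $o(n)$ in~\eqref{eq:asymptoticsHnl} is uniform for $\ell/n$ in compact subsets of $(0,\infty)$, which is precisely the regime used.
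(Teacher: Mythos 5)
Your proof is correct and matches what the paper intends: the corollary is stated without a separate proof precisely because it is a direct algebraic reformulation of Theorem~\ref{thm:asymptotic_H_nl}, and your substitution of $\ell\sim 2\theta n$ into~\eqref{eq:asymptoticsHnl}, together with the observation that the return probability is $\binom n2^{-\ell}H_{n,\ell}$, is exactly that computation. The bookkeeping (cancellation of $2\ell\log n$, uniformity of the $o(n)$ over compacts in $\ell/n$, and $\ell\cdot o(1)=o(n)$) is all correctly handled.
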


Finally, we note that other measures on partitions have been studied by Biane in the context of the asymptotic factorisation of characters of $\mathfrak{S}_n$~\cite{Biane_2001}. The limit-shape phenomena observed in this reference are qualitatively different from ours, and possible connections are left to future work.

\subsection{Open questions and perspectives}

Maybe the main open question that follows our work is the following: \textit{does there exist an analogue of the RSK algorithm which proves the identity~\eqref{eq:Frobenius} combinatorially?} If this is the case, then our results about $\lambda_1$ and $\lambda_2$ probably translate into distributional limit theorems for certain parameters of random factorisations (or random pure Hurwitz maps). 
To start with, can one identify an interpretation of the statistic $\lambda_1$ in terms of factorisations of the identity by transpositions?

Another question is, of course, to know if one can use the Plancherel--Hurwitz approach to say anything about \emph{connected} Hurwitz maps of high genus. This would be very interesting. It may also be interesting to combine this approach with the technology of integrable hierarchies, which have been so fruitful but have so far not directly led to precise asymptotic estimates nor limit theorems for connected random maps or Hurwitz numbers of high genus. In any case, obtaining the analogue of the Budzinski--Louf~\cite{Budzinski_Louf_2021} estimates for connected pure Hurwitz maps (that is, an exponential estimate up to the linear order, of the kind in~\eqref{eq:BL}) is an important question in the direction of completing the universality picture.

Another natural extension of the present work, which might be linked to the previous question, would be to consider other regimes for $\ell$, especially for sublinear values of $\ell$. It is natural to expect that when varying the parameter $\ell$ from $\ell=0$ to $\ell\sim 2\theta n$, an asymptotic statement interpolating between our main result and the VKLS theorem should hold, with a larger and larger first part $\lambda_1$. Heuristic calculations suggest a possible phase transition 
with markedly different behaviour for $\ell = O(\sqrt{n})$, which might be the threshold after which the contribution to the content-sum is overwhelmingly made by the first part. We leave these questions to further work.

Finally, among all the models of branched coverings/ Hurwitz numbers/ combinatorial maps, pure Hurwitz maps is the one for which the connection to the Plancherel measure is the most combinatorial, which made it the most natural candidate to test the idea of using random partition techniques to study the high genus regime. However it would be interesting to study other deformations of the Plancherel measure motivated by Hurwitz number theory, for example in the context of weighted Hurwitz numbers of~\cite{GPH:weightedHurwitz, ACEH}.

\subsection{Plan of the paper}

In Section~\ref{sec:partitions}, we provide full definitions and context for 
the Plancherel--Hurwitz measure, we overview the VKLS theorem and related estimates for the classical Plancherel case, and we derive new deviation bounds for the sum of contents under Plancherel measure, that we will need in Section~\ref{sec:microproofs}. The remaining sections are dedicated to proofs. The proof of Theorem~\ref{thm:limitshape} is divided into two sections, for the two scaling regimes associated with the limit shape. In Section~\ref{sec:macroproofs}, we focus on estimating the order of the first part $\lh_1$, thus proving part (\ref{thm:lst_first_part}) of Theorem~\ref{thm:limitshape} and by extension part~(\ref{thm:lst_vkls_bulk}) of the same theorem and Theorem~\ref{thm:asymptotic_H_nl}. In Section~\ref{sec:microproofs}, we will study the finer details of the limit shape and control the second part $\lh_2$ and the number of parts $\ell(\lh)$ to prove Theorem~\ref{thm:limitshape}, part~(\ref{thm:lst_2_sqrtn}). Finally in Section~\ref{sec:mapsproofs} we will study the properties of connected components in pure Hurwitz maps and prove Theorem~\ref{thm:nogiantcomp}.

\section{Partitions and the Plancherel measure}
\label{sec:partitions}

\subsection{Preliminary definitions for partitions} \label{sec:definitions}
Here, we gather definitions for the notions used to define the Plancherel--Hurwitz measure (Definition~\ref{def:hurwitz_measure}) and to describe a partition's shape, as used in Theorem~\ref{thm:limitshape}.

\begin{figure}[t]
\centering
{\def\svgwidth{0.22\sizeofpage}
\begingroup%
  \makeatletter%
  \providecommand\color[2][]{%
    \errmessage{(Inkscape) Color is used for the text in Inkscape, but the package 'color.sty' is not loaded}%
    \renewcommand\color[2][]{}%
  }%
  \providecommand\transparent[1]{%
    \errmessage{(Inkscape) Transparency is used (non-zero) for the text in Inkscape, but the package 'transparent.sty' is not loaded}%
    \renewcommand\transparent[1]{}%
  }%
  \providecommand\rotatebox[2]{#2}%
  \newcommand*\fsize{\dimexpr\f@size pt\relax}%
  \newcommand*\lineheight[1]{\fontsize{\fsize}{#1\fsize}\selectfont}%
  \ifx\svgwidth\undefined%
    \setlength{\unitlength}{172.86676191bp}%
    \ifx\svgscale\undefined%
      \relax%
    \else%
      \setlength{\unitlength}{\unitlength * \real{\svgscale}}%
    \fi%
  \else%
    \setlength{\unitlength}{\svgwidth}%
  \fi%
  \global\let\svgwidth\undefined%
  \global\let\svgscale\undefined%
  \makeatother%
  \begin{picture}(1,0.78306973)%
    \lineheight{1}%
    \setlength\tabcolsep{0pt}%
    \put(0,0){\includegraphics[width=\unitlength,page=1]{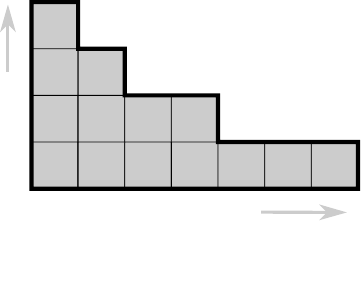}}%
    \put(0.00583738,0.48042404){\makebox(0,0)[lt]{\lineheight{1.25}\smash{\begin{tabular}[t]{l}$i$\end{tabular}}}}%
    \put(0.63514952,0.1715053){\makebox(0,0)[lt]{\lineheight{1.25}\smash{\begin{tabular}[t]{l}$j$\end{tabular}}}}%
    \put(0,0){\includegraphics[width=\unitlength,page=2]{yd-french.pdf}}%
  \end{picture}%
\endgroup%
 \hfill \def\svgwidth{0.342\sizeofpage}  
\begingroup%
  \makeatletter%
  \providecommand\color[2][]{%
    \errmessage{(Inkscape) Color is used for the text in Inkscape, but the package 'color.sty' is not loaded}%
    \renewcommand\color[2][]{}%
  }%
  \providecommand\transparent[1]{%
    \errmessage{(Inkscape) Transparency is used (non-zero) for the text in Inkscape, but the package 'transparent.sty' is not loaded}%
    \renewcommand\transparent[1]{}%
  }%
  \providecommand\rotatebox[2]{#2}%
  \newcommand*\fsize{\dimexpr\f@size pt\relax}%
  \newcommand*\lineheight[1]{\fontsize{\fsize}{#1\fsize}\selectfont}%
  \ifx\svgwidth\undefined%
    \setlength{\unitlength}{276.20183085bp}%
    \ifx\svgscale\undefined%
      \relax%
    \else%
      \setlength{\unitlength}{\unitlength * \real{\svgscale}}%
    \fi%
  \else%
    \setlength{\unitlength}{\svgwidth}%
  \fi%
  \global\let\svgwidth\undefined%
  \global\let\svgscale\undefined%
  \makeatother%
  \begin{picture}(1,0.68192857)%
    \lineheight{1}%
    \setlength\tabcolsep{0pt}%
    \put(0,0){\includegraphics[width=\unitlength,page=1]{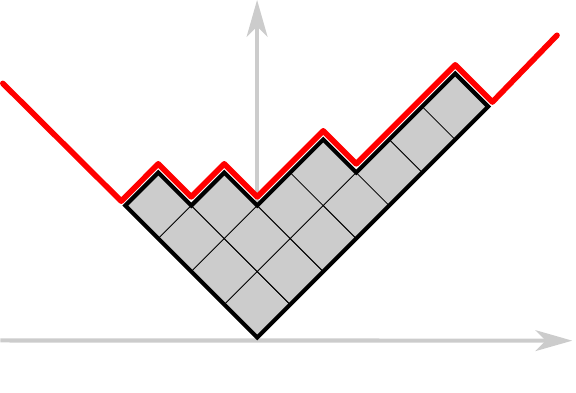}}%
    \put(0.87098288,0.01064241){\makebox(0,0)[lt]{\lineheight{1.25}\smash{\begin{tabular}[t]{l}$x$\end{tabular}}}}%
    \put(0.45814002,0.57404397){\makebox(0,0)[lt]{\lineheight{1.25}\smash{\begin{tabular}[t]{l}$\psi_\lambda(x)$\end{tabular}}}}%
  \end{picture}%
\endgroup%
 \hfill \def\svgwidth{0.37\sizeofpage} 
\begingroup%
  \makeatletter%
  \providecommand\color[2][]{%
    \errmessage{(Inkscape) Color is used for the text in Inkscape, but the package 'color.sty' is not loaded}%
    \renewcommand\color[2][]{}%
  }%
  \providecommand\transparent[1]{%
    \errmessage{(Inkscape) Transparency is used (non-zero) for the text in Inkscape, but the package 'transparent.sty' is not loaded}%
    \renewcommand\transparent[1]{}%
  }%
  \providecommand\rotatebox[2]{#2}%
  \newcommand*\fsize{\dimexpr\f@size pt\relax}%
  \newcommand*\lineheight[1]{\fontsize{\fsize}{#1\fsize}\selectfont}%
  \ifx\svgwidth\undefined%
    \setlength{\unitlength}{299.29145213bp}%
    \ifx\svgscale\undefined%
      \relax%
    \else%
      \setlength{\unitlength}{\unitlength * \real{\svgscale}}%
    \fi%
  \else%
    \setlength{\unitlength}{\svgwidth}%
  \fi%
  \global\let\svgwidth\undefined%
  \global\let\svgscale\undefined%
  \makeatother%
  \begin{picture}(1,0.62931941)%
    \lineheight{1}%
    \setlength\tabcolsep{0pt}%
    \put(0,0){\includegraphics[width=\unitlength,page=1]{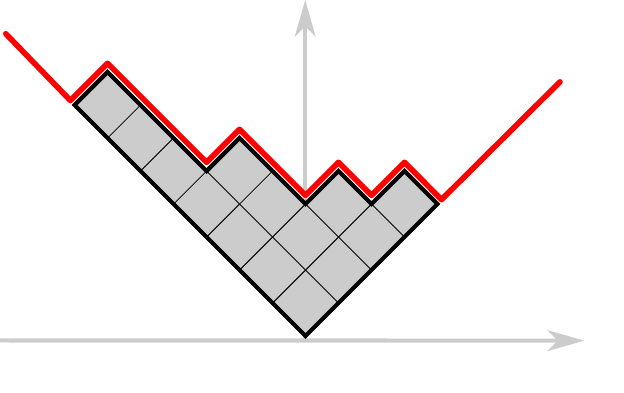}}%
    \put(0.88093624,0.00982137){\makebox(0,0)[lt]{\lineheight{1.25}\smash{\begin{tabular}[t]{l}$x$\end{tabular}}}}%
    \put(0.50182271,0.52975789){\makebox(0,0)[lt]{\lineheight{1.25}\smash{\begin{tabular}[t]{l}$\psi_{\lambda^\prime}(x)$\end{tabular}}}}%
  \end{picture}%
\endgroup%
 }
    \caption{Left, the Young diagram of the partition $\lambda = (7,4,2,1) \vdash 14$ in the French convention; center, the diagram of $\lambda$ in the Russian convention, with the profile $\psi_\lambda$ indicated in red (coordinate axes are in grey); right, the diagram and profile of the conjugate partition $\lambda^\prime=(4,3,2,2,1,1,1)$. } \label{fig:young_diagrams1}
\end{figure}

Formally, a partition $\lambda  = (\lambda_1 \geq \lambda_2 \geq  \ldots \geq \lambda_{\ell(\lambda)})$ of $n$ is a weakly decreasing finite sequence of $\ell(\lambda)$ positive integers (called parts) which sum to $n$. We denote this by ``$\lambda \vdash n$'', and call $n = |\lambda| := \sum_i \lambda_i$ the \emph{size} of $\lambda$ and $\ell(\lambda)$ its \emph{length}, and for convenience append the sequence with $\lambda_i = 0$ for $i > \ell(\lambda)$. The \emph{conjugate partition} $\lambda^\prime$ of $\lambda$ is the partition with parts $\lambda_j^\prime = \#\{i|\lambda_i \geq j\}$. A partition may be represented by its \emph{Young diagram} (see Figure~\ref{fig:young_diagrams1}). In the French convention, the Young diagram of a partition $\lambda \vdash n$ is a stack of left aligned rows of boxes with $\lambda_i$ boxes in the $i$th row from the bottom; the number of boxes in the $j$th column is $\lambda_j^\prime$. We shall refer to Young diagrams in the Russian convention, which are obtained by rotating a French Young diagram counterclockwise by 45$^\circ$. Then, the Young diagram of the conjugate partition $\lambda^\prime$ is simply the Young diagram of $\lambda$ flipped about a vertical axis. We shall refer to partitions and their Young diagrams interchangeably throughout this text.

Adding a coordinate system to a diagram in the Russian convention with its origin centered at the bottom corner, a partition is encoded by its \emph{profile}, which is the piecewise linear function describing the upper edge of its diagram extended out to a line of slope $+1$ to the right and a line with slope $-1$ to the left. When $n$ is large, a partition of $n$ is best described by its \emph{rescaled profile} $y = \psi_\lambda(x)$, whose coordinates are scaled by $1/\sqrt{n}$ relative to the distance between the centers of boxes in 
its Young diagram; it is composed of elements with slope $-1$ defined implicitly by the equations 
\begin{equation} \label{eq:profile}
\psi_{\lambda}(x) = \frac{\lambda_{\lfloor u \rfloor+1}+u}{\sqrt{n}}, \qquad x = \frac{ \lambda_{\lfloor u \rfloor+1} - u}{\sqrt{n}}; \qquad  u \in (0, \infty). 
\end{equation}
and elements with slope $+1$ defined implicitly by
\begin{equation}
\psi_{\lambda}(x) = \frac{\lambda^\prime_{\lfloor v \rfloor+1}+v}{\sqrt{n}}, \qquad x = \frac{ v-\lambda^\prime_{\lfloor v \rfloor+1} }{\sqrt{n}}; \qquad  v \in (0, \infty). 
\end{equation}
Note that $\psi_\lambda(x)$ is 1-Lipschitz.

In what follows we will repeatedly use the following bound for the number of integer partitions of $n$,
\begin{equation} \label{eq:asymp_number_partitions}
	\#\{\lambda \vdash n\} = \exp \big[ O(\sqrt{n}) \big],
\end{equation}
which follows for example from the precise asymptotic estimate of  Hardy and Ramanujan~\cite{HardyRamanujan}.

A \emph{Standard Young Tableau (SYT)} of shape $\lambda\vdash n$ is a filling of the boxes of the Young diagram of $\lambda$ with all the numbers from $1$ to $n$ such that they increase along rows and columns.
The number $f_\lambda$ of such tableaux can be calculated by the \emph{hook-length formula} 
\begin{equation} \label{eq:hooklength}
f_\lambda = \frac{ n! }{\prod_{\square \in \lambda} \eta_{\lambda}(\square)}, \qquad \eta_\lambda(\square_{i,j}) = \lambda_i - i + \lambda^\prime_j - j + 1,
\end{equation}
where the product is taken over all boxes of the diagram of $\lambda$, and the hook length $\eta_\lambda(\square)$ is the number of boxes in a hook connecting $\square$ to the edge of the diagram of $\lambda$, written here explicitly for the box $\square_{i,j}$ in the $i$th row and the $j$th column (see Figure~\ref{fig:young_diagrams2}). We have, quite immediately, $f_{\lambda} = f_{\lambda^\prime}$.

The \emph{content} $c(\square)$ of a box $\square$ gives the horizontal position of the center of the box in its diagram in the Russian convention: for the box $\square_{i,j}$, it is defined  by $c(\square_{i,j}) = j - i$. Adding the contents of all the boxes in the Young diagram of $\lambda$, its \emph{content-sum} is given by 
\begin{equation} \label{eq:contentsum_def}
C_\lambda := \sum_{\square \in \lambda} c(\square) = \sum_{i=1}^{\ell(\lambda)} \frac{\lambda_i (\lambda_i - 2i + 1) }{2}.
\end{equation}
It is straightforward to see that $C_{\lambda^\prime} = - C_{\lambda}$, and that if we exclude the partitions with $C_\lambda = 0$,
the sign of content-sum provides a natural way to separate the remaining partitions of $n$ into positive and negative halves, which can be obtained from one another by conjugation -- hence the introduction of the positive half measure $\P^+_{n,\ell}$.

\begin{figure}[t]
\centering
{\def\svgwidth{0.3\sizeofpage}
\input{for-frenchyd-syt.tex} \hfill \def\svgwidth{0.3\sizeofpage}  
\begingroup%
  \makeatletter%
  \providecommand\color[2][]{%
    \errmessage{(Inkscape) Color is used for the text in Inkscape, but the package 'color.sty' is not loaded}%
    \renewcommand\color[2][]{}%
  }%
  \providecommand\transparent[1]{%
    \errmessage{(Inkscape) Transparency is used (non-zero) for the text in Inkscape, but the package 'transparent.sty' is not loaded}%
    \renewcommand\transparent[1]{}%
  }%
  \providecommand\rotatebox[2]{#2}%
  \newcommand*\fsize{\dimexpr\f@size pt\relax}%
  \newcommand*\lineheight[1]{\fontsize{\fsize}{#1\fsize}\selectfont}%
  \ifx\svgwidth\undefined%
    \setlength{\unitlength}{225.41421356bp}%
    \ifx\svgscale\undefined%
      \relax%
    \else%
      \setlength{\unitlength}{\unitlength * \real{\svgscale}}%
    \fi%
  \else%
    \setlength{\unitlength}{\svgwidth}%
  \fi%
  \global\let\svgwidth\undefined%
  \global\let\svgscale\undefined%
  \makeatother%
  \begin{picture}(1,0.71607824)%
    \lineheight{1}%
    \setlength\tabcolsep{0pt}%
    \put(0.28640672,0.28463754){\color[rgb]{0,0,0}\makebox(0,0)[lt]{\lineheight{1.25}\smash{\begin{tabular}[t]{l}7\end{tabular}}}}%
    \put(0,0){\includegraphics[width=\unitlength,page=1]{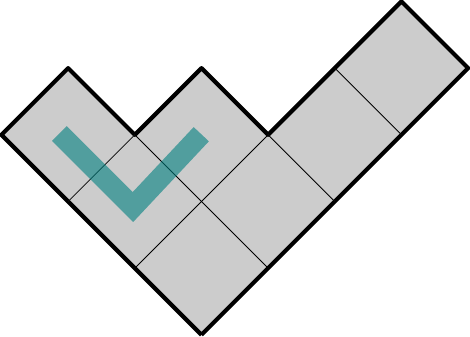}}%
    \put(0.11786209,0.41605692){\makebox(0,0)[lt]{\lineheight{1.25}\smash{\begin{tabular}[t]{l}1\end{tabular}}}}%
    \put(0.26542024,0.26300798){\makebox(0,0)[lt]{\lineheight{1.25}\smash{\begin{tabular}[t]{l}3\end{tabular}}}}%
    \put(0.41257758,0.11983907){\makebox(0,0)[lt]{\lineheight{1.25}\smash{\begin{tabular}[t]{l}6\end{tabular}}}}%
    \put(0.56119714,0.26269389){\makebox(0,0)[lt]{\lineheight{1.25}\smash{\begin{tabular}[t]{l}4\end{tabular}}}}%
    \put(0.70846299,0.41637101){\makebox(0,0)[lt]{\lineheight{1.25}\smash{\begin{tabular}[t]{l}2\end{tabular}}}}%
    \put(0.41281585,0.41607858){\makebox(0,0)[lt]{\lineheight{1.25}\smash{\begin{tabular}[t]{l}1\end{tabular}}}}%
    \put(0.85594512,0.56355475){\makebox(0,0)[lt]{\lineheight{1.25}\smash{\begin{tabular}[t]{l}1\end{tabular}}}}%
  \end{picture}%
\endgroup%
\hfill  \def\svgwidth{0.3\sizeofpage} 
\begingroup%
  \makeatletter%
  \providecommand\color[2][]{%
    \errmessage{(Inkscape) Color is used for the text in Inkscape, but the package 'color.sty' is not loaded}%
    \renewcommand\color[2][]{}%
  }%
  \providecommand\transparent[1]{%
    \errmessage{(Inkscape) Transparency is used (non-zero) for the text in Inkscape, but the package 'transparent.sty' is not loaded}%
    \renewcommand\transparent[1]{}%
  }%
  \providecommand\rotatebox[2]{#2}%
  \newcommand*\fsize{\dimexpr\f@size pt\relax}%
  \newcommand*\lineheight[1]{\fontsize{\fsize}{#1\fsize}\selectfont}%
  \ifx\svgwidth\undefined%
    \setlength{\unitlength}{225.41421356bp}%
    \ifx\svgscale\undefined%
      \relax%
    \else%
      \setlength{\unitlength}{\unitlength * \real{\svgscale}}%
    \fi%
  \else%
    \setlength{\unitlength}{\svgwidth}%
  \fi%
  \global\let\svgwidth\undefined%
  \global\let\svgscale\undefined%
  \makeatother%
  \begin{picture}(1,0.71607824)%
    \lineheight{1}%
    \setlength\tabcolsep{0pt}%
    \put(0.28640672,0.28463754){\color[rgb]{0,0,0}\makebox(0,0)[lt]{\lineheight{1.25}\smash{\begin{tabular}[t]{l}7\end{tabular}}}}%
    \put(0,0){\includegraphics[width=\unitlength,page=1]{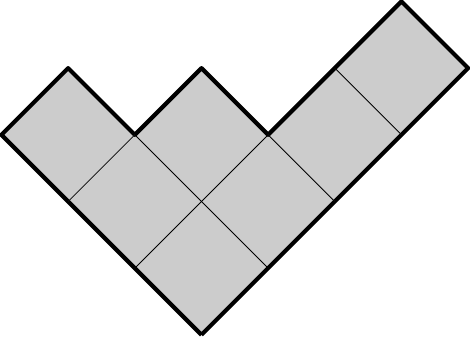}}%
    \put(0.11786209,0.41605692){\makebox(0,0)[lt]{\lineheight{1.25}\smash{\begin{tabular}[t]{l}-2\end{tabular}}}}%
    \put(0.26542024,0.26300798){\makebox(0,0)[lt]{\lineheight{1.25}\smash{\begin{tabular}[t]{l}-1\end{tabular}}}}%
    \put(0.41257758,0.11983907){\makebox(0,0)[lt]{\lineheight{1.25}\smash{\begin{tabular}[t]{l}0\end{tabular}}}}%
    \put(0.56119714,0.26269389){\makebox(0,0)[lt]{\lineheight{1.25}\smash{\begin{tabular}[t]{l}1\end{tabular}}}}%
    \put(0.70846299,0.41637101){\makebox(0,0)[lt]{\lineheight{1.25}\smash{\begin{tabular}[t]{l}2\end{tabular}}}}%
    \put(0.41281585,0.41607858){\makebox(0,0)[lt]{\lineheight{1.25}\smash{\begin{tabular}[t]{l}0\end{tabular}}}}%
    \put(0.85594512,0.56355475){\makebox(0,0)[lt]{\lineheight{1.25}\smash{\begin{tabular}[t]{l}3\end{tabular}}}}%
  \end{picture}%
\endgroup%
 }
    \caption{The Young diagram of the partition $(4,2,1) \vdash 7$ (in the Russian convention). Left, its boxes are filled to produce a SYT of shape $(4,2,1)$; center, they are filled with their hook lengths, showing there are $f_{(4,2,1)} = 7!/(6\cdot 4 \cdot 3 \cdot 2) = 35$ such tableaux; right, each box is filled with its content, and the content-sum is $C_{(4,2,1)} = -2-1+0+0+1+2+3=3$.} \label{fig:young_diagrams2}
\end{figure}

\medskip
We  conclude this subsection by saying a word on the representation theoretic origin of the connection (Equation~\eqref{eq:Frobenius}) between factorisations and the combinatorial quantities $f_\lambda$, $C_\lambda$ just defined.
First, the number $f_\lambda$ of SYT of shape $\lambda \vdash n$ is famously equal to the dimension of the irreducible representation $V^\lambda$ indexed by $\lambda$ of the symmetric group $\mathfrak{S}_n$ of order $n$, see for example \cite{Fulton_1996}. 
Because the set of all transpositions forms a conjugacy class in $\mathfrak{S}_n$, Frobenius's formula from the representation theory of finite groups (e.g. \cite[Thm A.1.9]{LZ}) directly implies that the number of factorisations of the identity into $\ell$ transpositions can be written as:
\begin{align}
H_{n,\ell} = 
	\frac{1}{n!}\sum_{\lambda \vdash n}{f_\lambda}^2  (\tilde \chi^\lambda (R))^\ell, 
\end{align}
where $\tilde \chi^\lambda (R)$ is the scalar (normalized character) which describes the action of the sum of all transpositions on the module $V^\lambda$ (the existence of this scalar is a consequence of Schur's lemma). The fact that $\tilde \chi^\lambda (R)$ is equal to the sum of contents $C_\lambda$ is a classical fact, which is most naturally understood via the connection between contents and Jucys--Murphy elements, see e.g.~\cite{Murphy1981}.

\subsection{Key elements from the classical Plancherel case}

For $\ell=0$ the Plancherel--Hurwitz measure becomes the Plancherel measure $\P_{n,0}(\lambda) = \frac{1}{n!} f_\lambda^2$ which, as said in the introduction, is very well understood.
\begin{theorem}[VKLS limit shape theorem~\cite{Logan_Shepp_1977,Vershik_Kerov_1977}]\label{thm:vkls}
Let $\lh \vdash n $ be a random partition under the Plancherel measure $\P_{n,0}$. Then, as $n\to \infty$ we have
 \begin{equation} \label{eq:strong_vkls}
 \sup_x | \psi_{\lh} (x) - \Omega(x)| \xrightarrow{p} 0 \qquad \text{and} \qquad \frac{\lh_1}{\sqrt n} \xrightarrow{p} 2, \; \qquad \frac{\ell(\lh)}{\sqrt n} \xrightarrow{p} 2 
 \end{equation}
 where $\psi_{\lh} (x)$ is the rescaled profile of $\lh$ and  $\Omega(x)$ is the curve defined at~\eqref{eq:plancherel_limit_profile}.
\end{theorem}
Several proofs exist for this limit shape result. Perhaps the simplest and most conceptual ones use the formulation of the Plancherel measure in the language of fermions and the infinite wedge space, which provides a direct connection with determinantal processes~\cite{Borodin_Okounkov_Olshanski_2000,Johansson_1998}.
Such approaches and their generalisations have grown into a vast field of research after the introduction of the theory of Schur processes (see e.g.~\cite{generalSchurProcessRef} for an entry point).

In the case $\ell>0$ that we study here, it is still possible (and natural) to formulate the Plancherel--Hurwitz measure in terms of the infinite wedge, see~\cite{Okounkov_2000}. This leads to a deep connection with integrable hierachies (the KP and 2-Toda hierarchies in particular), and even to a simple looking recurrence formula to compute the numbers $H_{n,\ell}$ (more precisely, their connected counterpart, see~\cite{Okounkov_2000,Dubrovin_Yang_Zagier_2017}). However, we do not know how to use either of these tools to approach our problems.

\smallskip

Other, maybe more elementary, proofs of the VKLS theorem are based on a direct scaling of the hook-length formula~\eqref{eq:hooklength} and variational calculus.
We recommend the first chapters of the book~\cite{Romik_2015} as a useful reference for such approaches. A key outcome of such an approach is the following estimate for the Plancherel measure  of a partition $\lambda$ in terms of its rescaled profile $\psi_\lambda$, see e.g.~\cite[Section 1.14]{Romik_2015}
\begin{equation} \label{eq:plancherel_integral}
\P_{n,0}(\lambda) = \frac{1}{n!} f^2_\lambda = \exp \bigg[ -n\bigg(1 + 2I_{\text{hook}}(\psi_\lambda) + O\bigg( \frac{\log n}{\sqrt{n}} \bigg) \bigg) \bigg]
\end{equation}
where $I_{\text{hook}}(\cdot)$ is an ``energy'' functional defined by the integral formula
\begin{equation}
    I_\text{hook} (\psi_\lambda) = \int_0^\infty \int_0^{\phi(u)} \log  \eta_{\psi_\lambda} (u,v)\, dv \, du, \qquad \eta_{\psi_\lambda}(u,v) = \phi_\lambda(u) - u +  \phi_{\lambda^\prime}(v)-v
\end{equation}
where $\phi_\lambda(u) = \lambda_{\lfloor \sqrt{n} u \rfloor + 1}/\sqrt{n} $ and $\phi_{\lambda'}(v)$ is defined similarly, so that $\eta_{\psi_\lambda}$ is the rescaled hook-length. 
The VKLS function $\Omega(x)$ given explicitly  at~\eqref{eq:plancherel_limit_profile} is the unique continuous function satisfying $\int (\Omega(x)-|x|)dx =1$ which minimizes $I_{\text{hook}}(\cdot)$ (see e.g.~\cite[Section 1.17]{Romik_2015}).  This (and continuity properties of $I_{\text{hook}}$)
implies the limit shape part of the VKLS theorem, since any partition whose profile is ``far'' from $\Omega(x)$ will  appear with an exponentially small probability.

While the first part $\lh_1$ is bounded below by the convergence of $\psi_{\lh}$ to $\Omega$ in the supremum norm, proving Theorem~\ref{thm:vkls} also requires an upper bound on $\lh_1$ that does not directly follow from this limit shape analysis and requires an additional method. One approach is to consider the \emph{Plancherel growth process}, explicitly introduced by Kerov~\cite{Kerov1998} and presented pedagogically in~\cite[Section 1.19]{Romik_2015}.
This process uses the RSK algorithm to provide a coupling between random Plancherel partitions of size $n-1$ and $n$, via a natural coupling at the level of random permutations.
Our proof of part (iii) of Theorem~\ref{thm:limitshape} does not directly use the Plancherel growth process, but it still relies on a comparison of measures for $n-1$ and $n$. For this we will need the following identity, which comes from the fact that the growth process is a well normalized stochastic process.
\begin{prop}[Plancherel growth process normalisation~\cite{Kerov1998}] \label{prop:pgp_norm}
Let $\mu \vdash n-1$ and let $\mu\nearrow \nu$ denote that $\nu \vdash n$ is obtained from $\mu$ by adding one box. We have
\begin{equation}
\sum_{\nu:\mu\nearrow \nu} {f_\nu} = n f_\mu .
\end{equation}
\end{prop}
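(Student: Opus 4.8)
The plan is to deduce the identity from the standard ``up--down operator'' calculus on Young's lattice, in which the combinatorial factor $n$ appears automatically; at the end I will note that the representation theory of $\mathfrak S_n$ already invoked in this section gives a one-line alternative.

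Set up the framework first. Recall that $f_\lambda$ counts the saturated chains $\emptyset \nearrow \lambda^{(1)} \nearrow \dots \nearrow \lambda^{(|\lambda|)} = \lambda$ in Young's lattice (read off an SYT by adding its boxes in increasing order of entries). Let $\mathcal V$ be the vector space with the set of all integer partitions as an orthonormal basis, and define the \emph{up} and \emph{down} operators $U,D$ on $\mathcal V$ by $U\lambda = \sum_{\kappa:\,\lambda\nearrow\kappa}\kappa$ and $D\lambda = \sum_{\kappa:\,\kappa\nearrow\lambda}\kappa$; they are mutually adjoint, and $U^{m}\emptyset = \sum_{|\lambda|=m} f_\lambda\,\lambda$, so that $f_\lambda = \langle U^{|\lambda|}\emptyset,\lambda\rangle$. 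The key step is the commutation relation $DU - UD = \id$ on $\mathcal V$: expanding $DU\lambda$ (add a box, then remove one) and $UD\lambda$ (remove a box, then add one), every contribution that adds a box at an addable corner $a$ and removes a box at a removable corner $b\neq a$ occurs with the same value in both expansions and cancels, leaving in $DU\lambda$ one copy of $\lambda$ per addable corner and in $UD\lambda$ one copy of $\lambda$ per removable corner; since every partition has exactly one more addable than removable corner, $DU\lambda - UD\lambda = \lambda$. This bookkeeping is the only part of the argument that is not purely formal, and it is routine.

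From $DU - UD = \id$ an immediate induction gives $DU^{m} = U^{m}D + m\,U^{m-1}$; applying this to $\emptyset$ and using $D\emptyset = 0$ yields $DU^{m}\emptyset = m\,U^{m-1}\emptyset$. Taking $m=n$ and $|\mu| = n-1$, and using that $D$ is adjoint to $U$,
\[
\sum_{\nu:\,\mu\nearrow\nu} f_\nu
= \Big\langle U^{n}\emptyset,\ \sum_{\nu:\,\mu\nearrow\nu}\nu\Big\rangle
= \big\langle U^{n}\emptyset,\ U\mu\big\rangle
= \big\langle DU^{n}\emptyset,\ \mu\big\rangle
= n\,\big\langle U^{n-1}\emptyset,\ \mu\big\rangle
= n\,f_\mu,
\]
which is the claim. (A direct bijective proof is possible but messier: the naive map ``delete the box of $\nu$ containing $n$'' does not respect the conditioning $\mu\nearrow\nu$, so one needs jeu-de-taquin-type surgery to exchange the corner carrying the largest entry with the corner distinguishing $\nu$ from $\mu$, which is why I prefer the operator route.) Alternatively, representation-theoretically: $f_\mu = \dim V^\mu$ for the Specht module of $\mathfrak S_{n-1}$, the branching rule gives $\operatorname{Ind}_{\mathfrak S_{n-1}}^{\mathfrak S_n} V^\mu \cong \bigoplus_{\nu:\,\mu\nearrow\nu} V^\nu$, and comparing dimensions yields $n f_\mu = [\mathfrak S_n:\mathfrak S_{n-1}]\dim V^\mu = \sum_{\nu:\,\mu\nearrow\nu} f_\nu$.
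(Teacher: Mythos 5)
The paper does not give a proof of this proposition at all: it is stated as a known fact and attributed to Kerov~\cite{Kerov1998}, so there is no ``paper's proof'' to compare against. Your argument is correct and self-contained. The up--down operator route is exactly the differential-poset proof: you correctly identify that the commutation relation $DU-UD=\id$ on Young's lattice is the only nontrivial point, and the two-sentence sketch of why the off-diagonal contributions cancel (common upper covers equal common lower covers for any two distinct partitions of the same size) and why the diagonal gives a net $+1$ (one more addable than removable corner) is the standard argument. From $DU-UD=\id$ the induction to $DU^m=U^mD+mU^{m-1}$, specialisation at $\emptyset$, and the adjointness $\langle U^n\emptyset,U\mu\rangle=\langle DU^n\emptyset,\mu\rangle$ are all sound, giving $\sum_{\nu:\mu\nearrow\nu}f_\nu=nf_\mu$.

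The representation-theoretic alternative you append --- dimensions in the branching rule $\operatorname{Ind}_{\mathfrak S_{n-1}}^{\mathfrak S_n}V^\mu\cong\bigoplus_{\nu:\mu\nearrow\nu}V^\nu$ --- is likely closer to the spirit of the cited reference and the shortest path given the representation theory already present in Section~\ref{sec:definitions}, but both proofs are standard and either would serve. Two small stylistic remarks: (1) in the operator proof you implicitly need $U,D$ to act on formal (possibly infinite) linear combinations, which is fine here since the relevant vectors are finitely supported; (2) the parenthetical aside about a bijective proof being ``messier'' is accurate but not load-bearing and could simply be omitted.
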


\subsection{Deviation bounds for the Plancherel measure}

In the literature, precise deviation bounds can be found for $\lh_1$ deviating from $2\sqrt n$ under the Plancherel measure. We state here one of them\footnote{the result in~\cite{DZ99} is actually more precise.} that will be useful for us in Section~\ref{sec:microproofs}:

\begin{prop}[\cite{DZ99}, equation (1.5)]\label{prop:DZ}
For every $\epsilon>0$, there exists $K>0$ such that 
\begin{equation}\label{eq_deviation_first_part_plancherel}
\P_{n,0}(\lh_1\geq (2+\epsilon)\sqrt n)\leq \exp[-(1+o(1))K\sqrt n].
\end{equation}
\end{prop}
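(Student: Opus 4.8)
The plan is to run the classical reduction from the Plancherel measure to the longest increasing subsequence, obtain the easy range of $\eps$ from a first‑moment bound, and then appeal to the finer (determinantal or variational) input for the remaining range.

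First, by the RSK correspondence, if $\si\in\mathfrak S_n$ is uniform then the shape $\lh$ of its associated tableaux is Plancherel‑distributed and $\lh_1$ equals the length $L_n(\si)$ of the longest increasing subsequence of $\si$. Set $k=\lceil(2+\eps)\sqrt n\rceil$. For any fixed $k$ positions the values of $\si$ there are in increasing order with probability $1/k!$, so a union bound gives
\begin{equation*}
\P_{n,0}(\lh_1\ge k)=\P\big(L_n(\si)\ge k\big)\le\binom nk\frac1{k!}\le\frac{n^k}{(k!)^2}.
\end{equation*}
With $k\sim c\sqrt n$, $c=2+\eps$, Stirling's formula yields $\log\!\big(n^k/(k!)^2\big)=-2c(\log c-1)\sqrt n\,(1+o(1))$, which is of the announced form $-(1+o(1))K\sqrt n$ with $K=2(2+\eps)\big(\log(2+\eps)-1\big)>0$ as soon as $2+\eps>e$. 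This already settles the proposition for $\eps>e-2$, and in particular whenever one only needs a sufficiently large deviation.

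For the remaining range $0<\eps\le e-2$ the first‑moment bound is too lossy (conditionally on $\{\lh_1\ge k\}$ the number of length‑$k$ increasing subsequences is typically enormous, so Markov's inequality is wasteful near the edge $c=2$), and genuine extra structure is needed. One option is to work directly from $\P_{n,0}(\lh_1\ge k)=\sum_{\lambda\vdash n,\ \lambda_1\ge k}f_\lambda^2/n!$: bound the sum by $\exp[O(\sqrt n)]$ times its largest term via \eqref{eq:asymp_number_partitions}, insert the hook‑length estimate \eqref{eq:plancherel_integral}, and minimise $I_{\text{hook}}$ over rescaled profiles constrained by $\lambda_1\ge c\sqrt n$; the optimiser is an explicit perturbation of $\Omega$ concentrated near its right edge, for which $n\big(I_{\text{hook}}(\psi_\lambda)-I_{\text{hook}}(\Omega)\big)$ is of order $\sqrt n$, giving the bound with an explicit $K=K(\eps)>0$. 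Alternatively, one passes to the Poissonised Plancherel measure, for which $\P(\lh_1\le k)$ is a $k\times k$ Toeplitz determinant in modified Bessel functions; its asymptotics---via the determinantal‑process computation of Borodin--Okounkov--Olshanski and Johansson, or the Riemann--Hilbert analysis underlying Baik--Deift--Johansson---give the sharp right‑tail rate, which a de‑Poissonisation lemma transfers to $\P_{n,0}$. This is exactly the estimate recorded in \cite{DZ99}, which we simply cite.

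The real obstacle is this last point: producing the \emph{correct} exponential order for $\eps$ close to $0$, rather than merely \emph{some} exponential decay. The first‑moment method cannot see the edge rate function, so one must invest either in the constrained variational analysis of $I_{\text{hook}}$ near the right edge or in the integrable/determinantal machinery; since for the applications in Section~\ref{sec:microproofs} the mere existence of a positive $K(\eps)$ suffices, we defer to \cite{DZ99} rather than reproducing either argument.
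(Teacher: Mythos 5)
The paper does not actually prove Proposition~\ref{prop:DZ}: it is stated as a citation of equation~(1.5) of \cite{DZ99}, with a footnote remarking that the reference is even more precise. Your proposal ends at the same place (defer to \cite{DZ99}), so in that sense you take the same route, but the elementary warm‑up you include is correct and worth a remark.

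Your first‑moment bound is right: by RSK, $\lh_1=L_n(\sigma)$ for a uniform $\sigma$, and $\P(L_n\ge k)\le\binom{n}{k}/k!\le n^k/(k!)^2$; Stirling with $k\sim c\sqrt n$ gives $\log\big(n^k/(k!)^2\big)=-2c(\log c-1)\sqrt n\,(1+o(1))$, which is a genuine exponential bound precisely when $c>e$, i.e.\ $\eps>e-2$. You also diagnose correctly why this method cannot reach $\eps$ near $0$ (the expected number of long increasing subsequences is still huge at the threshold $c=2$, so Markov's inequality is useless there), and that the full statement requires either the constrained variational analysis of the hook integral near the right edge or the Poissonised/determinantal analysis behind \cite{DZ99}. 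Since the applications in Section~\ref{sec:microproofs} only need the existence of some $K(\eps)>0$ for each fixed $\eps>0$, citing \cite{DZ99} for the small‑$\eps$ range --- exactly as the paper does --- closes the argument; your proposal therefore has no gap relative to the paper's own (minimal) treatment.
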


We will now prove a deviation bound for the content sum of a random partition under the Plancherel measure. To the best of our knowledge, such a  bound is new, and we think it might be of independent interest. However, we do not think this bound is optimal, nor that conditions on  $\lambda_1$ and $\ell(\lambda)$ are necessary -- finding the best upper bound on these deviation probabilities seems an interesting problem that we leave open.

\begin{theorem}[Deviation bound for the sum of contents under the Plancherel measure]\label{thm:deviationContents}
For every constant $c\geq 2$, there exists a constant $B$ such that when $t \left(\frac{\log n}{\sqrt n}\right)^{-1/6}$ goes to zero, one has
\begin{equation}\label{eq_deviation_content}
\P_{n,0}\left(C_{\lh}\geq n^{3/2}t\text{ and }\max(\lh_1,\ell(\lh_1))\leq c\sqrt n\right)\leq \exp[-(B+o(1))nt^{6}].
\end{equation}
\end{theorem}

\begin{proof}

Let us define the height function of the rescaled profile as $\hl(x):=\psi_\lambda(x)-|x|$, its distance above the lines $y = |x|$.
To any function $h$, we can associate its Fourier transform $\hat{h}$:
\begin{equation}
\widehat h(u)=\int_{-\infty}^{\infty}e^{-ixu}h(x)dx.
\end{equation}
We can calculate the content sum of $\lnn$ easily from $\hl$:

\begin{equation}\label{eq_content_int}
	C_\lambda=\frac{n^{3/2}}{\sqrt 2}\int_{-\infty}^{\infty}x\hl(x)dx=\frac{n^{3/2}}{\sqrt 2} {\widehat {\hl}}'(0).
\end{equation}

In \cite[equation 1.19]{Romik_2015}, it is shown that for all $\lnn$
\begin{equation}\label{eq_hook_functional}
\frac{f_\lambda^2}{n!}=\exp[-n(J(\hl)-J(h_0))+O(\sqrt n \log n)].
\end{equation}
 where $h_0$ is the height function of the VKLS curve $\Omega$, and $J=2I_{\text{hook}}$. For our proof, we will only need the following inequality (\cite[equations (1.35) and (1.38)]{Romik_2015})
 \begin{equation}\label{ineq_J_Q}
 J(\hl)-J(h_0)\geq Q(\hl-h_0),
 \end{equation}
where 
\begin{equation}Q(h):=\frac 1 4\int_{-\infty}^{\infty} |u||\widehat h(u)|^2du.\end{equation}

Let $h_*:=\hl-h_0$ for $\lambda$ satisfying $\max(\lone,\ll)\leq c\sqrt n$. Then the support of  $\hl$ is contained in $[-c,c]$, and thus so is the support of $h_*$ (because $h_0$ is supported on $[-2,2]$).
We have 
\begin{align}
	|{\widehat{h_*}}'(u)-{\widehat{h_*}}'(0)|=\left\lvert\int_{-\infty}^{\infty}(e^{ixu}-1)xh_*(x)dx\right\rvert 
 \end{align}
 and applying the inequality $|e^{it}-1]\leq |t|$ this gives 
 \begin{align}
|{\widehat{h_*}}'(u)-{\widehat{h_*}}'(0)| &\leq \int_{-c}^{c} x^2|u||h_*(x)|dx\nonumber\\
&\leq |u| \int_{-c}^{c} c^2 (\hl(x)+h_0(x))dx = 2c^2|u|,
\end{align}
where we use the fact that $h_0\geq 0$ and $h_\lambda\geq 0$, and obtain the last expression for the upper bound from the fact that $\int_{-c}^{c}\hl=\int_{-c}^{c}h_0=1$.
Now, for all $v>0$, we have
\begin{align}
|{\widehat{h_*}}(v)-v{\widehat{h_*}}'(0)|&\leq \int_{0}^{v}|{\widehat{h_*}}'(u)-{\widehat{h_*}}'(0)|du\leq \int_{0}^{v} 2c^2udu
=C v^2
\end{align}
for the constant $C = \frac{2c^2}{3}>0$.
Therefore by the triangular inequality, we have
\begin{equation}|{\widehat{h_*}}(v)|\geq v|{\widehat{h_*}}'(0)|-C v^2. \end{equation}
This gives 
\begin{align}
Q(\hl-h_0)=\frac 1 4\int_{-\infty}^\infty |v||{\widehat{h_*}}(v)|^2dv&\geq \frac 1 4\int_0^\epsilon v|{\widehat{h_*}}(v)|^2dv\notag\\
&\geq\frac 1 4 \int_0^\epsilon v\left( v|{\widehat{h_*}}'(0)|-C v^2\right)^2 dv
\end{align}
for any $\epsilon >0$. Now we set $\epsilon :=\frac {|{\widehat{h_*}}'(0)|} {2C}$, and note that for all $v\in[0,\epsilon]$ one has $v|{\widehat{h_*}}'(0)|-C v^2\geq \frac{v|{\widehat{h_*}}'(0)|}{2}\geq 0$. Inserting this into the lower bound above, we obtain
\begin{align}\label{eq_bound_Q}
Q(\hl-h_0)\geq \frac 1 4\frac{|{\widehat{h_*}}'(0)|^2}{4} \int_0^\epsilon v^3 dv = C' |{\widehat{h_*}}'(0)|^6
\end{align}
for the constant $C'= \frac{1}{4^{5} C^{4}}>0$.

	Now, if $\Cl\geq n^{3/2}t$, by~\eqref{eq_content_int} we have  ${\widehat {h_*}}'(0)={\widehat\hl}'(0)\geq \sqrt 2 t$, which by~\eqref{eq_bound_Q} implies
\begin{equation}Q(\hl-h_0)\geq 8C' t^6, \end{equation}
which by~\eqref{eq_hook_functional} and~\eqref{ineq_J_Q} implies the result.
\end{proof}

\section{Proofs: Macroscopic features} \label{sec:macroproofs}

In this section we will prove parts~(\ref{thm:lst_first_part}) and (\ref{thm:lst_vkls_bulk}) of Theorem~\ref{thm:limitshape}, which describe the first part and the general shape of a random partition under the measure $\P_{n,\ell}^+$ where $\ell\sim 2\theta n$. As we will see, to establish these ``macroscopic'' characteristics we will show that the cost to the measure associated with deviating from the typical behaviour is exponential (this will no longer be true in Section~\ref{sec:microproofs}). We will also prove Theorem~\ref{thm:asymptotic_H_nl}, the approximate asymptotics for $H_{n,\ell}$ at high genus, using the macroscopic limit shape and the intermediate results used to prove it. 

\subsection{Notation}

For any set $\Lambda $ of partitions of $n$ we write
\begin{equation}\Z_n(\Lambda):=\frac{1}{n!}\sum_{\lambda\in \Lambda} f_\lambda^2 (C_\lambda)^{\ell}.\end{equation}
Note that the  partition function of our model is  $H_{n,\ell} = \Z_n(\{\lambda \vdash n\})=\frac{1}{n!}\sum_{\lambda\vdash n} f_\lambda^2 (C_\lambda)^{\ell}$.

We also fix $\varepsilon := \frac{1}{100}$, and we split any partition $\lambda \vdash n$ into 
\begin{equation}\lambda = \lambda^{+} \sqcup \lambda^{-},\end{equation} 
where $\lambda^{+}$ denotes the parts of $\lambda$ that are greater than $n^{1-\varepsilon}$ and $\lambda^{-}$ the parts that are less than $n^{1-\varepsilon}$, see Figure~\ref{fig:split_partition}. 
The value of $\varepsilon$ is somewhat arbitrary at this stage, and will not affect our final results. With this threshold in mind we establish some convenient sets. For $M \in [n^{1-\varepsilon},n]$ and given $\mu \vdash n - M$ we let
\begin{equation}\Lambda(\mu,M):=\{\lambda||\lp|=M, \lm=\mu\}.\end{equation}
Then for $m \in [0,M]$ we set 
\begin{equation}\label{eq:Lambdadef}\Lambda(\mu,M,m):=\{\lambda\in \Lambda(\mu,M)|\lone=M-m\}.\end{equation}
We also introduce the notation $\lambda^0= M \sqcup \mu$, such that 
$\Lambda(\mu,M,0)=\{\lambda^0\}$. 

 In addition, we introduce the following set of partitions (which depends implicitly on the integers~$n$ and~$\ell$):
\begin{equation}\label{eq:Lambdastardef} \Lambda^*:=\left\{\lnn \ \middle|\ \lambda^+=(\lambda_1)\text{ and }\lambda_1\in \left[\frac{\mt \ell}{\log n}, \frac{\Mt \ell}{\log n}\right]\right\}.\end{equation}

\medskip
Finally, we establish the following convention:
{\it from now on (Sections~\ref{sec:macroproofs}--\ref{sec:microproofs}--\ref{sec:mapsproofs}), all little-o's and big-O's are uniform for $\ell/n$ in any compact subset of $(0,\infty)$ (in addition to uniformity in other quantities, which is stated as appropriate).}

\subsection{Deterministic estimates and lower bound on \texorpdfstring{$H_{n,\ell}$}{Hnl}}

We will use the following convenient bounds.
\begin{lemma}[Useful bounds]\label{lemma:factss}
Let $\lambda\vdash n$ with $\lambda^+=(\lambda_1,\dots,\lambda_p)$, then
\begin{longlist}[(i)]
\item\label{f1} $\displaystyle\frac{1}{n!}f_\lambda^2\leq \frac{n!}{\prod_{i=1}^p (\lambda_i!)^2(n-|\lp|)!)^2}f_{\lm}^2 \leq \frac{n!}{{\prod_{i=1}^p (\lambda_i!)^2}(n-|\lp|)!}\leq \frac{n^{|\lp|}}{\prod_{i=1}^p (\lambda_i!)^2}$,
\item \label{f2}$C_\lambda\leq \dfrac{\lone n}{2}$,
\item\label{f3} $C_\lambda=C_{\lambda^+} - p|\lambda^-| + C_{\lambda^-} = \displaystyle\sum_{i=1}^p\frac{\lambda_i(\lambda_i-2i + 1)}{2}-p|\lm|+C_{\lm}$,
\item\label{f4} $C_\lambda\leq \frac{|\lp|^2}2+\frac{n^{2-\eps}}{2}$.
\end{longlist}
\end{lemma}

\begin{proof}
\noindent \textit{(i).}
One can fill a Young diagram of shape $\lambda\vdash n$ with distinct numbers from $1$ to $n$ by picking $\lambda_1$ numbers then filling the first row with them in increasing order, then doing the same for the second row with $\lambda_2$ numbers and so on until the $p$th row. There are at most $\binom{n}{\lambda_1,\lambda_2,\ldots,\lambda_p}$ ways to do so, and once this is done there are at most $f_\lm$ ways to fill the remaining rows.
Moreover, from~\eqref{eq:RSK} we have $f_{\lm}^2\leq |\lm|! = (n-|\lp|)!$. Therefore we obtain
\begin{equation}
\frac{1}{n!}f_\lambda^2\leq \frac{n!}{\prod_{i=1}^p (\lambda_i!)^2(n-|\lp|)!)^2}f_{\lm}^2 \leq
\frac{n!}{{\prod_{i=1}^p (\lambda_i!)^2}(n-|\lp|)!}.    
\end{equation}
The last inequality of the claim is straightforward.

\noindent \textit{(ii).} From the definition of $C_\lambda$, given by~\eqref{eq:contentsum_def},
\begin{equation}C_\lambda=
\sum_{i=1}^{\ell(\lambda)} \frac{\lambda_i (\lambda_i - 2i + 1) }{2}
\leq 
\sum_{i=1}^{\ell(\lambda)} \frac{\lambda_i \cdot \lambda_i }{2} \leq 
	\frac{\lambda_1}{2} \sum_{i=1}^{\ell(\lambda)} \lambda_i = \dfrac{\lone n}{2}.\end{equation}

\noindent \textit{(iii).} Splitting~\eqref{eq:contentsum_def} into contributions from the first $p$ rows and the subsequent ones, we see that the former just contribute $C_{\lambda^+}$ to $C_\lambda$,  while for the latter the content of each box is the content of a box of $\lambda^-$ shifted by $-p$ (the number of parts in $\lambda^+$). This gives the result.

\noindent \textit{(iv).} By (ii), $C_\lm\leq \frac{n^{2-\eps}}{2}$ since all parts of $\lm$ are smaller than $n^{1-\eps}$. Applying this to (iii) yields the result.
\end{proof}

\begin{lemma}[Bounding the normalisation from below]\label{lem_lowerbound_Z_n}
We have
\begin{equation}\label{eq_lowerbound_Zn_rough}
H_{n,\ell} \geq \left(\frac{\ell}{\log n}\right)^{2\ell} \exp\left[-(2-\log2)\ell+O(\sqrt{n}\log^2 n)\right].
\end{equation}
\end{lemma}
\begin{proof}
	Let  $L:=\lfloor\frac{2\ell}{\log n}\rfloor$, 
	 and let $\mu$ be a partition of $n-L$ maximizing $f_\mu$ among partitions with $\mu_1\leq 3 \sqrt{n}$ and $\ell(\mu)\leq 3 \sqrt{n}$. We let $\lambda^*=L\sqcup \mu$. Using Lemma~\ref{lemma:factss}~(iii), and noting that $C_\mu \geq -\frac 3 2 n^{3/2}$ (by Lemma~\ref{lemma:factss}~(ii)), we have
	\begin{equation}\label{u1}
	C_{\lambda^*}=\frac{L(L+1)}{2}-|\mu|+C_{\mu}\geq \left(1+O\left(\frac{\log^2 n}{\sqrt{n}}\right)\right)\frac{L^2}2.
	\end{equation}
	On the other hand, consider the sum
	\begin{equation}\sum_{\nu\vdash n-L} f_\nu^2 = (n-L)!.
	\end{equation}
	By the VKLS theorem, we know that this sum is dominated by partitions such that $\nu_1\leq 3\sqrt{n}$. Since the number of terms is bounded by~\eqref{eq:asymp_number_partitions}, we deduce that 
\begin{equation}\label{eq_local_value_f_mu}
    f_\mu^2 = (n-L)!e^{O(\sqrt n)}.
\end{equation}

Therefore, we have
\begin{align}
Z_n(\{\lambda^*\})&=\frac{1}{n!}f_{\lambda^*}^2C_{\lambda^*}^\ell 
\geq \frac{1}{n!}f_{\mu}^2C_{\lambda^*}^\ell \notag\\
&\geq 
  \frac{(n-L)!e^{O(\sqrt n)}}{n!} C_{\lambda^*}^\ell  
   \notag\\
   &\geq n^{-L} e^{O(\sqrt n)}\bigg( \frac{L^2}{2}\bigg)^\ell\left(1+O\left(\frac{\log^2 n}{\sqrt{n}}\right)\right)^\ell \notag \\
&= \exp\left[2\ell(\log \ell-\log\log n)-\ell(2-\log2)+O(\sqrt{n}\log^2 n)\right].
\end{align}
We successively used the (in)equalities $f_{\lambda^*}\geq f_{\mu}$,~\eqref{eq_local_value_f_mu} and~\eqref{u1}.

This finishes the proof since $H_{n,\ell} =\Z_n(\{\lambda \vdash n\}) \geq \Z_n(\{\lambda^*\})$.
\end{proof}

\subsection{
First bound on the first and second parts}

We now proceed with a succession of lemmas that gradually give better control on the partition $\lh^+$. 
\begin{lemma}[Controlling big parts] \label{lem_control_big} Let $\lh$ be a random partition of $n$ under the assumptions of Theorem~\ref{thm:limitshape}. Then $|\lh^+| \in \left[\frac{\mt \ell}{\log n}, \frac{\Mt \ell}{\log n}\right]$ w.h.p.
\end{lemma} 

\begin{figure}
\centering
{\def\svgwidth{\sizeofpage}
\input{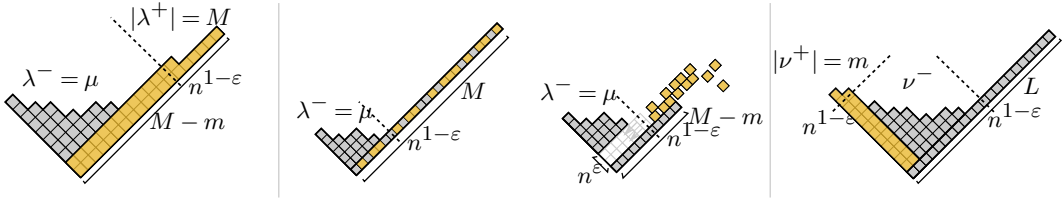} 
}
\caption{Left, a partition $\lambda \vdash n$ in $\Lambda(\mu,M,m)$, with $\lambda^+$ and $\lambda^-$ indicated. Center, a SYT of shape $\lambda^0 \in \Lambda(\mu,M,0)$ (the filling of the boxes is not shown) is transformed to a SYT of some shape $\lambda \in \Lambda(\mu,M,m) $ or to something else by the surjective operation used to prove Claim~\ref{fact_bound_g_f}. Right, a partition $\lambda = L\sqcup \nu^\prime$ with $|\nu^+| = m$, used to prove Lemma~\ref{lemma:lengthbd}.} \label{fig:split_partition}
\end{figure}

\begin{proof}
Given $\lnn$, set $\Rl:=\frac{|\lp|\log n}{\ell}$. In this proof, all little-o's are independent of $\Rl$. For all $\lnn$, by the last inequality in Lemma~\ref{lemma:factss}(\ref{f1}) and the inequality $\frac 1 {k!}\leq \left(\frac e k\right)^k$, we have
\begin{align}
\frac{1}{n!}f_\lambda^2
\leq\exp[|\lp|\log n-2\sum_{i=1}^p\lambda_i\log(\lambda_i)+2|\lp|].
\end{align}
Using the fact that for $i\in [1,p]$, $\lambda_i\geq n^{1-\eps}$, this bound gives
\begin{align}\label{eq_A}
\frac{1}{n!}f_\lambda^2
&\leq\exp[|\lp|\log n-2\sum_{i=1}^p(1-\eps)\lambda_i\log(n)+2|\lp|]\notag\\
&= \exp[-(1-2\eps)\Rl \ell+2\Rl \ell/\log n ].
\end{align}
On the other hand, by Lemma~\ref{lemma:factss}(\ref{f4}), if $C_\lambda\geq 0$, then 
\begin{align}\label{eq_B}
C_\lambda^{\ell}
&\leq \exp\bigg[\ell\log\bigg(\frac{|\lp|^2+n^{2-\eps}}2\bigg)\bigg]\notag\\
&= \exp\bigg[2\ell(\log \ell-\log\log n)+\ell\bigg(\log\bigg(\Rl^2+\frac{n^{2-\eps}\log^2n}{\ell^2}\bigg)-\log2\bigg) \bigg].
\end{align}

Combining~\eqref{eq_A} and~\eqref{eq_B}, and using~\eqref{eq_lowerbound_Zn_rough}, we obtain
\begin{align}
	\frac{\Z_n(\{\lambda\})}{H_{n,\ell}}
	&\leq \exp\bigg[\ell\bigg(2-(1-2\eps)\Rl+\frac{2\Rl }{ \log n}+\log\bigg(\Rl^2+o(1)\bigg)\bigg)+o(n)\bigg].
\end{align}
Now, the function $r\mapsto 2-(1-2\eps)r +\log(r^2)$ has a unique maximum on $\mathbb{R}_{>0}$ and goes to $-\infty$ on both ends, so it is less than $-1/100$ outside of a closed interval $I$, and in fact one can take $I=[\mt,\Mt]$. 
Hence for $n$ large enough and $\lnn$ with $\Rl\not\in [\mt,\Mt]$, $\P_{n,\ell}^+(\lambda)\leq \exp(-\ell/100+o(n))$, which entails the result since there are $e^{O(\sqrt n)}$ partitions of $n$. 
\end{proof}

\begin{lemma}[Uniqueness of the big part]\label{lemma:uniqueness}
Let $\lh$ be a random partition of $n$ under the assumptions of Theorem~\ref{thm:limitshape}. Then $\lh^+ = (\lh_1)$, and equivalently $\lh_2 \leq n^{1-\eps}$, w.h.p.
\end{lemma}
The proof of Lemma~\ref{lemma:uniqueness} requires a comparison of the contribution of partitions with a single ``big part'' with the contribution of those with more than one (indeed, because we have neither exact formulas nor precise estimates on the normalisations, we can only rely on ``comparison'' of probabilities at this stage). We will perform this comparison among partitions having the same ``small parts'' (called $\mu$), using the sets $\Lambda(\mu,M,m)$ defined at~\eqref{eq:Lambdadef} with $|\lambda^+|=M$ and $\lambda_1 = M-m$.
We will need the following two claims, whose proof is postponed to after that of the lemma.
\begin{claim}\label{fact_bound_g_content}
For all $\lambda\in\Lambda(\mu,M,m)$, we have
$C_\lambda\leq C_{\lambda^0} -(m-1)\frac M 2$.
\end{claim}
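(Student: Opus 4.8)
My plan is to obtain the claim as an immediate consequence of parts~(\ref{f2}) and~(\ref{f3}) of Lemma~\ref{lemma:factss}, with no further input. Write $p:=\ell(\lp)$ for the number of parts of $\lambda$ exceeding $n^{1-\varepsilon}$, so that $\lp=(\lambda_1,\dots,\lambda_p)$ is a partition of $M$ with $\lambda_1=M-m$ and $\lm=\mu$. I would first apply Lemma~\ref{lemma:factss}(\ref{f3}) to $\lambda$ to get $C_\lambda=C_{\lp}-p|\mu|+C_\mu$; applying the same identity to $\lambda^0=M\sqcup\mu$, whose only part above $n^{1-\varepsilon}$ is $M$ (note $M\ge\lambda_1>n^{1-\varepsilon}$ whenever $\Lambda(\mu,M,m)\neq\emptyset$), gives $C_{\lambda^0}=\frac{M(M-1)}{2}-|\mu|+C_\mu$. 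Subtracting, and using $p\ge1$ together with $|\mu|\ge0$,
\[
C_\lambda-C_{\lambda^0}=C_{\lp}-\frac{M(M-1)}{2}-(p-1)|\mu|\ \le\ C_{\lp}-\frac{M(M-1)}{2}.
\]

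The remaining step is to control $C_{\lp}$. Since $\lp$ is a partition of $M$ whose first part equals $\lambda_1=M-m$, Lemma~\ref{lemma:factss}(\ref{f2}) applied to $\lp$ (in place of $\lambda$, with $M$ in place of $n$) yields $C_{\lp}\le\frac{\lambda_1 M}{2}=\frac{(M-m)M}{2}$. Plugging this in,
\[
C_\lambda-C_{\lambda^0}\ \le\ \frac{(M-m)M}{2}-\frac{M(M-1)}{2}=\frac{M}{2}\bigl((M-m)-(M-1)\bigr)=-(m-1)\frac{M}{2},
\]
which is exactly the claimed bound (for $m=0$ one has $\lambda=\lambda^0$ and the inequality is trivially true).

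I do not anticipate a real obstacle: the argument is just ``Lemma~\ref{lemma:factss}(\ref{f3}) to reduce everything to $\lp$, then Lemma~\ref{lemma:factss}(\ref{f2}) on $\lp$''. The only things demanding a moment's care are verifying that in $\lambda^0$ the part $M$ is genuinely the unique big part (so that the shift in Lemma~\ref{lemma:factss}(\ref{f3}) is by $-|\mu|$ rather than by $-p|\mu|$), which is automatic since every part of $\mu$ is $<n^{1-\varepsilon}\le M$, and observing that $\Lambda(\mu,M,m)$ is simply empty for small positive $m$ (all parts of $\lp$ must exceed $n^{1-\varepsilon}$), so that there is nothing to prove in that range.
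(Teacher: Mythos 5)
Your proof is correct and follows essentially the same route as the paper: Lemma~\ref{lemma:factss}(\ref{f3}) to isolate the contribution of the big parts, followed by Lemma~\ref{lemma:factss}(\ref{f2}) applied to $\lp$ to bound $C_{\lp}\le\frac{(M-m)M}{2}$. (Incidentally, your version of~\eqref{eq:clambda0}, with $\frac{M(M-1)}{2}$ in place of $\frac{M}{2}(M+1)$, is the correct evaluation of $C_{(M)}$, and the final line of the paper's proof is only consistent with that reading.)
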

\begin{claim}\label{fact_bound_g_f}
If   $m>0$ then,
$\sum_{\lambda\in\Lambda(\mu,M,m)}f_\lambda\leq f_{\lambda^0}\exp[m(2\eps\log n +1)]$.
\end{claim}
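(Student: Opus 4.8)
\textbf{Proposal for the proof of Claim~\ref{fact_bound_g_f}.}

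The plan is to exhibit a combinatorial ``merging'' map on standard Young tableaux and to bound the size of its fibres, exactly as suggested by the middle picture of Figure~\ref{fig:split_partition}. First note that we may assume $m>n^{1-\eps}$: if $0<m\le n^{1-\eps}$ then $\Lambda(\mu,M,m)=\emptyset$, since any $\lambda$ in it would have $\lambda_1=M-m<M=|\lp|$, hence $\ell(\lp)\ge 2$, hence $\lambda_2>n^{1-\eps}$, contradicting $\lambda_2\le m\le n^{1-\eps}$; in that case the claimed inequality is trivial. So assume $m>n^{1-\eps}$, and write $p:=\ell(\lp)$ for $\lambda\in\Lambda(\mu,M,m)$, so that $p\ge 2$ and, since rows $2,\dots,p$ of $\lp$ each exceed $n^{1-\eps}$ and sum to $m\le n$, we have $p-1\le m/n^{1-\eps}\le n^{\eps}$.

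Define a map $\Psi\colon \bigsqcup_{\lambda\in\Lambda(\mu,M,m)}\mathrm{SYT}(\lambda)\to \mathrm{SYT}(\lambda^0)$ as follows: given a SYT $T$ of shape $\lambda\in\Lambda(\mu,M,m)$, keep the $\lambda^-=\mu$ part (rows $p+1,p+2,\dots$) in place, slide it up to rows $2,3,\dots$, and replace rows $1,\dots,p$ of $T$ by the single row of length $M=\lambda_1+\dots+\lambda_p$ obtained by sorting all the entries appearing in those rows. The first step is to check that $\Psi(T)$ is a genuine SYT of $\lambda^0=M\sqcup\mu$. Rows and columns internal to the $\mu$-part and the new first row are increasing by construction, so the only condition to verify is that, for each column $j\le\mu_1$, the $j$-th smallest entry of the merged first row is strictly smaller than the topmost $\mu$-entry of that column, which is $T(p+1,j)$. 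This holds because the $pj$ entries $\{T(i,k):1\le i\le p,\ 1\le k\le j\}$ (all of which exist, since every big row has length $>n^{1-\eps}\ge\mu_1\ge j$, and all of which are distinct big-row entries) satisfy $T(i,k)\le T(i,j)\le T(p,j)<T(p+1,j)$; as $pj\ge j$, at least $j$ big-row entries lie below $T(p+1,j)$, which is exactly what is needed.

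Finally we bound the fibres of $\Psi$. Fix $T^0\in\mathrm{SYT}(\lambda^0)$, with first row $a_1<\dots<a_M$ and a fixed $\mu$-part. Any $T$ with $\Psi(T)=T^0$ has the same $\mu$-part as $T^0$, and its rows $1,\dots,p$ are a SYT filling of some shape $(\lambda_1,\dots,\lambda_p)$ by the fixed set $\{a_1,\dots,a_M\}$ with $\lambda_1=M-m$. Such a tableau is completely determined by the choice of which $m$ of the $M$ entries lie in rows $2,\dots,p$ ($\binom{M}{m}$ choices) together with the assignment of each of those $m$ entries to one of the rows $2,\dots,p$; since there are at most $p-1\le n^{\eps}$ such rows, there are at most $n^{\eps m}$ such assignments (within each row the order is forced, and the shape $(\lambda_2,\dots,\lambda_p)$ is read off), and dropping the column constraints with the $\mu$-part only inflates the count. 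Hence $|\Psi^{-1}(T^0)|\le\binom{M}{m}\,n^{\eps m}$. Using $m!\ge (m/e)^m$ and $\log m>(1-\eps)\log n$, we get $\binom{M}{m}\le\binom{n}{m}\le (en/m)^m\le\exp[m(1+\eps\log n)]$, so $|\Psi^{-1}(T^0)|\le\exp[m(1+2\eps\log n)]$. Summing over $T^0$,
\begin{equation*}
\sum_{\lambda\in\Lambda(\mu,M,m)}f_\lambda=\sum_{T^0\in\mathrm{SYT}(\lambda^0)}|\Psi^{-1}(T^0)|\le f_{\lambda^0}\exp[m(2\eps\log n+1)],
\end{equation*}
which is the claim. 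The main point to get right is the fibre estimate: the naive bound $f_{(\lambda_2,\dots,\lambda_p)}\le\sqrt{m!}$ is far too weak to produce the exponent $2\eps$, so one must exploit that the ``extra'' rows number at most $n^{\eps}$ (giving $n^{\eps m}$) and pair this with the matching gain $m>n^{1-\eps}$ in bounding $\binom{M}{m}$.
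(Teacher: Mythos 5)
Your proof is correct and follows essentially the same approach as the paper: you count preimages of a merging map $\bigsqcup_\lambda\mathrm{SYT}(\lambda)\to\mathrm{SYT}(\lambda^0)$, whereas the paper counts outputs of the inverse (splitting) map $\mathrm{SYT}(\lambda^0)\times(\text{choices})\to\bigsqcup_\lambda\mathrm{SYT}(\lambda)$; these two dual formulations give the identical bound $\binom{M}{m}n^{\eps m}$. A nice bonus of your write-up is that you actually verify the column inequalities for the merged tableau and justify $p-1\le n^\eps$, steps the paper dispatches with ``it is easily checked.''
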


\begin{proof}[Proof of Lemma~\ref{lemma:uniqueness}]
By Lemma~\ref{lem_control_big}, we know that, w.h.p., 
$|\lh^+|\in [\mt \frac{\ell}{\log n},\Mt \frac{\ell}{\log n}]$. We can thus assume this event for the rest of this proof.

We now condition on $|\lh^+|=M$ and $\lh^-=\mu$, with given $M\in  [ \mt \frac{\ell}{\log n},\Mt \frac{\ell}{\log n}]$ and $\mu\vdash n-M$. In the rest of the proof, the little-o's are uniform in $M$ and $\mu$ satisfying these conditions.
Let $m>0$. Combining Claims~\ref{fact_bound_g_content} and~\ref{fact_bound_g_f}, one obtains

\begin{align}{Z_n(\Lambda(\mu,M,m))}&=\sum_{\lambda\in\Lambda(\mu,M,m)}\frac{1}{n!}f_\lambda^2\Cl^\ell\leq\sum_{\lambda\in\Lambda(\mu,M,m)}\frac{1}{n!}f_\lambda^2\left(C_{\lambda^0} -(m-1)\frac M 2\right)^\ell\notag\\
&\leq \frac{1}{n!}\left(\sum_{\lambda\in\Lambda(\mu,M,m)}f_\lambda\right)^2\left(C_{\lambda^0} -(m-1)\frac M 2\right)^\ell\notag\\
&\leq \frac{1}{n!}\left(f_{\lambda^0}\exp[m(2\eps\log n +1)]\right)^2\left(C_{\lambda^0} -(m-1)\frac M 2\right)^\ell.
\end{align}
Hence
\begin{equation}\label{v1}\frac{Z_n(\Lambda(\mu,M,m))}{Z_n(\{\lambda^0\})}\leq \exp\left[\ell\log\left(1-\frac{(m-1)M}{2C_{\lambda^0}}\right)+2m(2\eps\log n +1)\right].
\end{equation}
But we know by Lemma~\ref{lemma:factss}(\ref{f4}) that
$C_{\lambda^0}\leq \frac{M^2}{2}+\frac{n^{2-\eps}}{2} = (1+o(1))\frac{M^2}{2}$
and
$M\leq \Mt \frac{\ell}{\log n}$.
Hence, applying these two inequalities successively to~\eqref{v1}, one gets
\begin{align}
\frac{Z_n(\Lambda(\mu,M,m))}{Z_n(\{\lambda^0\})}&\leq \exp\left[(1+o(1))\ell\log\bigg(1-\frac{m-1}{M}\bigg)+2m(2\eps\log n +1)\right]\notag\\
&\leq\exp\left[(1+o(1))\ell\log\bigg(1-\frac{(m-1)\log n}{\Mt \ell}\bigg)+2m(2\eps\log n +1)\right]. 
\end{align}
 Applying the inequality $\log(1-x)\leq x$ to the last bound gives
\begin{align} 
\frac{Z_n(\Lambda(\mu,M,m))}{Z_n(\{\lambda^0\})} &\leq\exp\left[-(1+o(1))\frac{(m-1)\log n}{\Mt }+2m(2\eps\log n +1)\right]\notag\\
&= \exp\left[(1+o(1))m  (4\eps-\frac 1 \Mt) \log n\right]. \end{align}
where in the final rewriting of the bound we use the fact that, since $m>0$, we have $m\geq n^{1-\eps}$ to include the terms $\log n$ and $2m$ in the $o(1)$.  
Now, recalling that  $\eps=\frac 1 {100}$, for $n$ large enough the last bound gives
\begin{align}\label{eq_exp_suppressed} \frac{Z_n(\Lambda(\mu,M,m))}{Z_n(\{\lambda^0\})} \leq \exp\left(-\frac{m\log n}{100} \right)\leq \exp\left(-n^{1-\eps}\right). 
\end{align}
Summing this over all $m>0$, we have
\begin{equation}
\sum_{m>0}Z_n(\Lambda(\mu,M,m))=o(Z_n(\{\lambda^0\}))
\end{equation}
which is enough to conclude that $\lh^+=(\lh_1)$ w.h.p.
\end{proof}

\begin{proof}[Proof of Claim~\ref{fact_bound_g_content}]
By Lemma~\ref{lemma:factss}~(iii), for any $\lambda \in \Lambda(\mu,M)$ we have a simple upper bound
\begin{equation} \label{eq:easycontsumbound}
 C_\lambda \leq C_{\lambda^+} -(n-M)+ C_{\mu}, 
 \end{equation} 
since $\lambda^+$ contains at least one part. For $\lambda^0$ the same lemma gives the equality 
\begin{equation} \label{eq:clambda0}
C_{\lambda^0} = \frac{M}{2}(M-1) - (n-M) + C_\mu.
\end{equation}
When $\lambda \in \Lambda(\mu,M, m)$, by Lemma~\ref{lemma:factss} (ii) we have $C_{\lambda^+}\leq M(M-m)/2$, and inserting this into~\eqref{eq:easycontsumbound} we have
 \begin{equation} 
 C_\lambda \leq \frac{M}{2}(M-m) -(n-M) + C_{\mu} = C_{\lambda^0} - \frac{M}{2}(m-1)
 \end{equation}
 as required.
\end{proof}

\begin{proof}[Proof of Claim~\ref{fact_bound_g_f}]
	To prove this claim we define a ``redistribution'' operation that enables us to compare the contribution of partitions with one big part to others.
Let $T$ be an SYT of shape $\lambda^0$, and consider the following operation.
\begin{longlist}
\item Create $n^\eps$ empty rows between the first row of $T$ and the rest,
\item choose $m$ numbers in the first row of $T$ \textit{($\binom M m$ choices)},
\item for each of these numbers, choose one of the newly created rows, and move it there \textit{($n^\eps$ choices each time)},
\item sort each row and delete the empty rows.
\end{longlist}
The output is either a SYT of some $\lambda\in\Lambda(\mu,M,m)$, or something else in the cases where the rows are not ordered by size or the labels do not increase along columns.
It is easily checked that this procedure can output any SYT of shape $\lambda$, for any $\lambda\in\Lambda(\mu,M,m)$ (indeed, for any $\lambda$, $\lp$ must have at most $\frac{n}{n^{1-\eps}}=n^\eps$ parts). Hence we have
\begin{align}
\sum_{\lambda\in\Lambda(\mu,M,m)}f_\lambda&\leq \binom{M} m n^{\eps m}f_{\lambda^0}
	\leq \frac{M^m}{m!} n^{\eps m}f_{\lambda^0}
\leq f_{\lambda^0}\exp[m(2\eps\log n +1)]
\end{align}
where in the last inequality we used the bound $m!\geq (m/e)^m$, along with the facts that $\log M \leq \log n$ and $\log m\geq (1-\eps)\log n$.
\end{proof}

\subsection{Asymptotic estimate for  \texorpdfstring{$H_{n,\ell}$}{} and macroscopic behaviour of  \texorpdfstring{$\lh$}{}}

Lemmas~\ref{lem_control_big} and~\ref{lemma:uniqueness} imply that $\lh\in\Lambda^*$ w.h.p. (using the notation introduced at~\eqref{eq:Lambdastardef}) and hence
\begin{equation}\label{aeu}
H_{n,\ell} =(1+o(1))Z_n(\Lambda^*),
\end{equation}
and it follows from the proofs that the little-o is uniform for $\ell/n$ in any compact subset of $(0,\infty)$
From this result, we obtain  Theorem~\ref{thm:asymptotic_H_nl} and parts (i) and (ii) of Theorem~\ref{thm:limitshape}.

\begin{proof}[Proof of Theorem~\ref{thm:asymptotic_H_nl}]

Take $\lambda\in \Lambda^*$. We have, by Lemma~\ref{lemma:factss}(\ref{f4}) 
\begin{equation}\label{uuu0}
  C_\lambda\leq \frac{\lone^2}{2}+\frac{n^{2-\eps}}{2}=(1+o(1))\frac{\lone^2}{2}  
\end{equation} where the $o(1)$ is uniform over all $\lambda\in \Lambda^*$, and more generally, from now on, all little-o's and big-O's will be uniform over all $\lambda\in \Lambda^*$ when applicable.

On the other hand, by Lemma~\ref{lemma:factss}(\ref{f1}) we have $\frac 1 {n!}f_\lambda^2\leq\frac{n^{\lambda_1}}{(\lone!)^2}$. But the Stirling approximation plus the fact that $\lone=O\left(\frac n {\log n}\right)$ yields $\lone=\exp(\lone\log n +o(n))$
hence
\begin{equation}\label{uuu1}
    \frac 1 {n!}f_\lambda^2\leq\exp(-\lone\log n +o(n)).
\end{equation}
Combining~\eqref{uuu0} and~\eqref{uuu1} yields
\begin{align}\label{uuu}
Z_n(\{\lambda\})=\frac 1 {n!}f_\lambda^2C_\lambda^\ell \leq\exp\left[2\ell\log(\lone)-\ell\log 2-\lone\log n +o(n)\right].
\end{align}
Substituting $\lone=\frac{\Rl \ell}{\log n}\sim\frac{\Rl 2 \theta n }{\log n}$ into the inequality above, we obtain
	\begin{align} \label{eq:upperbdz}
Z_n(\{\lambda\})
	&\leq \bigg( \frac{\ell}{\log n}\bigg)^{2 \ell} \exp\left[(2\log\Rl-\log2-\Rl)\ell+o(n)\right].
\end{align}
Now, since the function on the positive reals $x \mapsto 2 \log x - x$ has a unique maximum at $x = 2$, we have 
\begin{align}
Z_n(\{\lambda\})
&\leq \bigg( \frac{\ell}{\log n}\bigg)^{2 \ell} \exp \big[(-2 + \log 2) \ell+o(n) \big],
\end{align}
and since there are  $e^{O(\sqrt n)}$ partitions of $n$, $Z_n(\Lambda^*) \leq  \max_{\lambda \in \Lambda^*} Z_n(\{\lambda\})e^{O(\sqrt n)} $. 
Together with the lower bound of Lemma~\ref{lem_lowerbound_Z_n} and~\eqref{aeu} this proves that 
\begin{equation}
	H_{n,\ell} = \bigg( \frac{\ell}{\log n}\bigg)^{2 \ell} \exp \big[ ( -2+ \log 2) \ell+o(n) \big],
\end{equation}
as required.
\end{proof}

\begin{proof}[Proof of Theorem~\ref{thm:limitshape}, part (i)]
The upper bound~\eqref{eq:upperbdz} in the previous proof along with Lemma~\ref{lem_lowerbound_Z_n} implies that for $\lambda\in\Lambda^*$ (uniformly),
\begin{equation}\P^+_{n,\ell}(\lambda)\leq \exp\left[\ell(2\log\Rl-2 \log2+2-\Rl)+o(n)\right].
\end{equation}
Any non-negligible deviation of $R_{\lambda}$ from the unique maximiser of this upper bound thus entails an exponentially decreasing probability, which is enough to conclude that, under the Plancherel--Hurwitz measure at high genus, $R_{\lh}\xrightarrow{p} 2$, which is what we wanted.
\end{proof}

\begin{proof}[Proof of Theorem~\ref{thm:limitshape}, part (ii)]
We refine the previous upper bound on $\P^+_{n,\ell}(\lambda)$ for $\lambda = \lambda_1 \sqcup \tilde{\lambda} \in \Lambda^*$ by writing $\frac{1}{n!}f_\lambda^2\leq\frac{n!}{\lambda_1!^2(n-\lambda_1)!^2}f_{\tilde \lambda}$ by Lemma~\ref{lemma:factss} part~(\ref{f1}), along with $C_{\lambda}=(1+o(1))\frac{\lone^2}{2}$ as above, uniformly for $\lambda\in \Lambda^*$. 

Recall that $\Rl=\frac{\lone \log n}{\ell}$. By Lemma~\ref{lem_lowerbound_Z_n} and the Plancherel entropy estimate~\eqref{eq:plancherel_integral} for $\frac{1}{(n-\lambda_1)!}f_{\tilde{\lambda}}^2$, we have
 \begin{align}\P^+_{n,\ell}(\lambda)&\leq \exp\left[\ell(2\log\Rl-2 \log2+2-\Rl)-n(1 + 2I_{\text{hook}}(\psi_{\tilde{\lambda}}))+o(n)\right]\notag\\&\leq \exp\left[-n(1+2I_{\text{hook}}(\psi_{\tilde{\lambda}}))+o(n)\right]
 \end{align}
for all $\lambda\in\Lambda^*$, uniformly (where the second inequality comes from the unique maximum at $R_\lambda = 2$). We recognise the Plancherel measure estimate~\eqref{eq:plancherel_integral}. 
	Since $\lh\in\Lambda^*$ w.h.p., this implies, as in the classical Plancherel case (see \cite[Section 1.17]{Romik_2015}), the almost sure convergence in supremum norm to the VKLS limit shape~\eqref{eq:plancherel_limit_profile}.
\end{proof}
	
	Note that since the function $x\mapsto \Omega(x)-|x|$ has support $[-2,2]$, the convergence of the profile directly implies the ``lower bound'' in Theorem~\ref{thm:limitshape}~(iii): 
\begin{equation}\label{eq_lowr_bound_second_part}
\min(\lh_2,\ell(\lh))\geq (2-o_p(1))\sqrt n. \qedhere
\end{equation}
The upper bound is more delicate and is the subject of the next section.

\subsection{First bound on the number of parts}

To conclude this section, we use Theorem~\ref{thm:limitshape}~(i) to prove a further macroscopic feature of the limit behaviour, which completes the rough bounding box one side of which is determined by Lemma~\ref{lemma:uniqueness}. 
\begin{lemma}[Bounding the length above]\label{lemma:lengthbd}
Let $\lh$ be a random partition of $n$ under the assumptions of Theorem~\ref{thm:limitshape}. Then $\ell(\lh)\leq n^{1-\eps}$ w.h.p.
\end{lemma}
\begin{proof}
Take $\lambda\in \Lambda^*$,  we first use arguments similar to Lemma~\ref{lem_control_big} to control the number of boxes in the big parts of the conjugate partition $\lambda^\prime$. Let us write $\lambda = L \sqcup \nu^\prime$ (as in the proof of Lemma~\ref{lemma:uniqueness}, the little-o's and big-O's are uniform in $L$ and $\nu$) so the conjugate of the small parts $\lambda^-$ is
\begin{equation}
    \nu = (\lambda_2, \lambda_3, \ldots, \lambda_{\ell(\lambda)})^\prime  = \nu^+ \sqcup \nu^-
\end{equation}
where $\nu^+$ and  $\nu^-$ denote respectively the parts of $\nu$ that are greater and less than $n^{1-\eps}$, and $\ell(\nu) \leq n^{1-\eps}$ (see Figure~\ref{fig:split_partition}). Suppose that the big parts have size $m =m(\lambda)= |\nu^+|$ (note that if $m>0$ then $m\geq n^{1-\eps}$). By Lemma~\ref{lemma:factss}~(i) and the Stirling approximation, we have
\begin{equation}
    \frac{1}{n!}{f_\lambda}^2 \leq \frac{n!}{(n-L)!^2L!^2} {f_{\nu}}^2 \leq 
    \frac{1}{(n-L)!} {f_{\nu}}^2 n^{-L}\exp\left[ O\left( \frac{n}{\log n}\right)\right],
\end{equation}
but by~\eqref{eq_A}
\begin{equation}
 \frac{1}{(n-L)!}f_{\nu}^2 \leq \exp[-(1-2\eps)m \log (n-L) + 2m].
\end{equation}
Since $C_\lambda\leq \frac{1}{2}(L^2+n^{2-\eps})$ (by Lemma~\ref{lemma:factss}(\ref{f4})), we get
\begin{equation}
Z_n(\{\lambda\})\leq n^{-L}\left(\frac{L^2}{2}\right)^\ell\exp\left[-(1-2\eps)m \log (n) + O(m)+O\left( \frac{n}{\log n}\right)\right].
\end{equation}
Using~\eqref{eq_lowerbound_Zn_rough} to bound $H_{n,\ell}$ below we have
\begin{align}
\frac{Z_n(\{\lambda\})}{H_{n,\ell}} \leq \exp\left[-\frac{98 m}{100}\log n + O(m) +  O\left( \frac{n}{\log n}\right) \right].
\end{align} 
Thus, since there are $e^{O(\sqrt n)}$ partitions of $n$,
\begin{equation}\label{rr1}
    Z\left( \left\{\lambda \in \Lambda^* \middle\vert m(\lambda)\geq \frac{n}{\sqrt{\log n}} \right\}\right)=o(H_{n,\ell}).
\end{equation}

Now, take $\lambda\in \Lambda^*$ with $m(\lambda)<\frac{n}{\sqrt{\log n}}$. Write $\lambda = L\sqcup \nu^\prime$ as above, fix $\nu^- := \mu^\prime$, $|\nu^+| = m$ and fix $M = L + m$. In analogy with the proof of Lemma~\ref{lemma:uniqueness}, for given $M$ and $m$, we consider the set $\hat{\Lambda}(\mu,M,m)$ of all such partitions, and compare  $Z_n(\hat{\Lambda}(\mu,M,m))$ to  $Z_n(\{\lambda^0\})$, where $\lambda^0 = M \sqcup \mu$ as before, such that $\hat{\Lambda}(\mu,M,0) = \{\lambda^0\}$. It follows immediately from this definition that Claim~\ref{fact_bound_g_f} also applies to $\hat{\Lambda}(\mu,M,m)$, that is to say 
\begin{equation}
\sum_{\lambda\in\hat{\Lambda}(\mu,M,m)}f_\lambda\leq f_{\lambda^0}\exp[m(2\eps\log n +1)].
\end{equation}
Then, by Lemma~\ref{lemma:factss}~(iii) twice and~\eqref{eq:clambda0}, we get
\begin{align}\label{cccb0}
    \Cl&=
    \frac{L(L-1)}{2}-(n-L)-C_\nu\notag\\
    &= \frac{L(L-1)}{2}-(n-L)-C_{\nu^+}+ \ell(\nu^+)(n-M)+C_\mu\notag\\
   &= C_{\lambda^0} - \frac{m(2M-m+1)}{2} + \ell(\nu^+)(n-M)-C_{\nu^+}.
\end{align}
We have by Lemma~\ref{lemma:factss}(\ref{f3})
\begin{equation}\label{cccb}
    C_{\nu^+}=\sum_{i=1}^{\ell(\nu^+)} \frac{\nu_i(\nu_{i} - 2i +1)}{2}   \geq \sum_{i=1}^{\ell(\nu^+)} \frac{\nu_i(n^{1-\eps} - 2\ell(\nu^+) +1)}{2}\geq 0  
\end{equation}
for $n$ large enough.
Now, it is clear that $\ell(\nu^+) \leq m/n^{1-\eps}$ hence $\ell(\nu^+)(n-M)=o(mM)$.  Since $m = o(M)$, combining these facts with~\eqref{cccb0} in~\eqref{cccb}, we obtain 
\begin{align}
    \Cl\leq C_{\lambda^0} - mM(1+o(1)).
\end{align}
As $C_{\lambda^0} \leq (1+o(1)) \frac{M^2}{2}$, repeating precisely the arguments of Lemma~\ref{lemma:uniqueness} we have 
\begin{align}
\frac{Z_n(\hat{\Lambda}(\mu,M,m))}{Z_n(\{\lambda^0\})} &\leq \exp \bigg[\ell \log \left(1 - \frac{m \log n }{3 \ell} \right)(1+o(1)) + 2m(2\eps \log n +1) \bigg] \notag\\&\leq \exp \left(-\frac{m\log n}{4}(1 + o(1))\right)\leq \left(-n^{1-\eps}\frac{\log n}{4}(1 + o(1))\right).
\end{align}

Since there are $e^{O(\sqrt{n})}$ partitions of $n$ and $Z_n(\{\lambda^0\}\leq H_{n,\ell}$ we obtain
\begin{equation}\label{rr2}
    Z\left( \left\{\lambda \in \Lambda^* \middle\vert m(\lambda)< \frac{n}{\sqrt{\log n}} \right\}\right)=o(H_{n,\ell}).
\end{equation}

Equations~\eqref{rr1} and ~\eqref{rr2} immediately imply that the probability that $m(\lh)>0$ is $o(1)$, so  $\ell(\lh) \leq n^{1-\eps}$ w.h.p. as required.
\end{proof}

\section{Proofs: Microscopic features}\label{sec:microproofs}

In this section we consider the smaller scale of the limit shape and prove Theorem~\ref{thm:limitshape} part~(\ref{thm:lst_2_sqrtn}) by bounding the size of $\lh_2$ and $\ell(\lh)$ above. As previously mentioned, the VKLS limit shape result in supremum norm does not imply such a bound, and even in the Plancherel case extra arguments are needed to obtain the sharp bound $(2+o_p(1))\sqrt{n}$ on $\lh_1$. 
In the Plancherel case, a good way to do this is to use the corner-growth process mentioned in the introduction, which provides one with a coupling between the measures at sizes $n$ and $n-1$, through which the evolution of $\lambda_1$ is tractable inductively (see e.g. \cite[Section 1.19]{Romik_2015}).

In our context, we do not have such a coupling, however we will be able to compare the behaviour of random partitions in sizes $n$ and $n-1$ by calculations which in some sense provide an approximation of the corner-growth process.

\medskip 
From now on we will work with a given value of $\lone=L$ with $L \in \intvl$, and in the notation of Theorem~\ref{thm:limitshape}, we let $\tl=\lambda\setminus \lone$. We introduce notation for the normalisation with a fixed first part, putting 
\begin{equation}Z_{n}[L] := Z_n(\{\lambda \vdash n| \lambda_1 = L \}) . \label{eq:znLnotation} \end{equation}

\subsection{An intermediate bounding box for \texorpdfstring{$\tilde{\lh}$}{ltilde} }

\begin{prop}\label{prop_bounding_box}
	Under the hypotheses of Theorem~\ref{thm:limitshape}, 
 the second part satisfies $\lh_2 \leq (e+o_p(1)) \sqrt{n}$, and similarly the length satisfies $\ell(\lh)\leq (e+o_p(1)) \sqrt{n}$.
\end{prop}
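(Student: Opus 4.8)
The plan is to condition on the value $\lambda_1=L$ of the big part (which lies in $\intvl$ with high probability by Lemma~\ref{lem_control_big}, and for which $\lambda_2,\ell(\lambda)\le n^{1-\eps}$ with high probability by Lemmas~\ref{lemma:uniqueness} and~\ref{lemma:lengthbd}), and to obtain a workable description of the conditional law of $\tl=\lambda\setminus\lambda_1$, a partition of $n-L$. Applying the hook-length formula~\eqref{eq:hooklength} to $\lambda=L\sqcup\tl$ and using that deleting the first row leaves the hooks of all other boxes unchanged gives $f_\lambda=\binom{n}{L}\,f_{\tl}\,P(\tl,L)^{-1}$ with $P(\tl,L)=\prod_{j=1}^{\lambda_2}\big(1+\tl'_j/(L-j+1)\big)$, which for $\lambda_2,\ell(\lambda)\le n^{1-\eps}$ lies between $1$ and $n^{O(1)}$. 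Writing moreover $C_\lambda=A+C_{\tl}$, where the first-row contribution (minus $|\tl|$) satisfies $A=\tfrac{L^2}{2}(1+o(1))$ and $|C_{\tl}|/A=o(1)$ on this event, we get $C_\lambda^\ell=A^\ell\exp[\beta_nC_{\tl}(1+o(1))]$ with $\beta_n:=\ell/A=\Theta(\log^2 n/n)$. Hence, conditionally on $\lambda_1=L$, the law of $\tl$ is the Plancherel measure $\P_{n-L,0}$ reweighted by the polynomial-range factor $P(\tl,L)^{-2}$ and by an exponential content tilt $\exp[\beta_nC_{\tl}]$ of very small inverse temperature $\beta_n$. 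This tilted-Plancherel picture plays the role of the corner-growth coupling that is unavailable here; morally it is an averaged comparison between sizes $n$ and $n-1$.

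Given this, fix $\delta>0$, set $K=(e+\delta)\sqrt n$, and aim to show $\P^+_{n,\ell}(\lambda_2>K)\to 0$. The key quantitative input is the crude bound $\tfrac{1}{m!}f_\nu^2\le (e\sqrt m/\nu_1)^{2\nu_1}$, which follows from $\nu_1!\ge(\nu_1/e)^{\nu_1}$ together with Lemma~\ref{lemma:factss}(\ref{f1}); its right-hand side, as a function of $\nu_1$, is decreasing once $\nu_1>\sqrt m$ and is $\exp[-\Theta(\nu_1)]$ once $\nu_1>e\sqrt m$ — this is exactly where the constant $e$ enters. Applied with $m=n-L$, $\nu=\tl$, it yields $f_{\tl}^2/(n-L)!\le\exp[-c_\delta\sqrt n(1+o(1))]$, with $c_\delta=2(e+\delta)\log(1+\delta/e)>0$, uniformly over $\tl$ with $\tl_1=\lambda_2>K$.

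The main obstacle, and the heart of the argument, is that the content tilt $\exp[\beta_nC_{\tl}]$ can a priori be as large as $\exp[\Theta(\lambda_2\log^2 n)]$, which would overwhelm the gain $\exp[-c_\delta\sqrt n]$. I would control it by splitting the relevant partition sum according to the size of $C_{\tl}$: when $C_{\tl}\le n^{3/2}/\log^3 n$ the tilt is only $\exp[o(\sqrt n)]$, and the weight of $\{\tl_1>K\}$ there is at most $\P_{n-L,0}(\tl_1>K)\le\exp[-\Omega(\sqrt n)]$ by Proposition~\ref{prop:DZ} (valid since $K>2\sqrt{n-L}$); when $C_{\tl}$ is larger, a dyadic decomposition combined with the content-sum deviation bound of Theorem~\ref{thm:deviationContents} — applicable because $\max(\lambda_2,\ell(\lambda))$ is bounded by a constant multiple of $\sqrt n$ on the event considered — shows that driving $C_{\tl}$ past its typical $o(n^{3/2})$ scale costs strictly more than the tilt gains, so each dyadic block still contributes $\exp[-\Omega(\sqrt n)+o(\sqrt n)]$. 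Summing over the polynomially many values of $L$, and dividing by the denominator $Z_n(\{\lambda\in\Lambda^*:\lambda_1=L\})\ge\binom{n}{L}^2A^\ell(n-L)!\,e^{-o(\sqrt n)}$ (the typical $\tl$ being VKLS-shaped, where $P^{-2}$ and the tilt are $e^{\pm o(\sqrt n)}$), gives $\P^+_{n,\ell}(\lambda_2>K)\to 0$. The range $\lambda_2\in(C_0\sqrt n,n^{1-\eps}]$, where Theorem~\ref{thm:deviationContents} no longer applies directly, is disposed of by the same crude bound (which decays very fast there) together with the already-established w.h.p. bound $\lambda_2\le n^{1-\eps}$ of Lemma~\ref{lemma:uniqueness}, taking $C_0$ large.

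Finally, $\ell(\lambda)\le(e+o_p(1))\sqrt n$ follows by running the same scheme in the conjugate direction: writing $f_{\tl}=f_{\tl'}$ and $\tl'_1=\ell(\tl)=\ell(\lambda)-1$, the crude bound now forces $\ell(\lambda)\le e\sqrt n$, and here the content tilt is actually helpful, since a tall partition has negatively biased content sum, so $\exp[\beta_nC_{\tl}]\le 1$ typically; the required estimate $\P_{n-L,0}(C_{\tl}\le -s,\ \max\le c\sqrt n)\le\exp[-Bs^6/n^8]$ follows from Theorem~\ref{thm:deviationContents} by the conjugation-invariance of the Plancherel measure. The cross terms, where both $\lambda_2$ and $\ell(\lambda)$ exceed a large constant times $\sqrt n$, are contained in events already handled.
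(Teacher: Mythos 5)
Your proposal takes a genuinely different route from the paper, and unfortunately there is a gap in the range $\lambda_2\in(C_0\sqrt n,\,n^{1-\eps}]$ which the proposed tools do not close.

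\textbf{Difference in approach.} The paper does not view the conditional law of $\tl$ as a tilted Plancherel measure at the fixed size $n-L$. Instead it removes the second part entirely, comparing the partition $L\sqcup k\sqcup\mu\vdash n$ with $L\sqcup\mu\vdash n-k$. The crucial point is that this operation changes the content sum only by a factor $1+o(1)$ (the second part contributes at most $O(n^{2-2\eps})\ll L^2/2$), so the content factors cancel between numerator and denominator and one is left comparing partition sums $Z_{n-k}[L]$ and $Z_n[L]$ of \emph{different sizes}. This comparison is the content of Lemma~\ref{lem:compare_normalizations}, which uses the Plancherel growth process normalisation. Chaining it $k$ times gives $Z_{n-k}[L]/Z_n[L]\le e^{o(k)}$ uniformly for $k\le n^{1-\eps}$, so the binomial bound $\binom{n}{k}^2(n-k)!/n!\le n^k/(k/e)^{2k}$ then closes the argument cleanly. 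No bound on $C_{\tl}$ is ever required.

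\textbf{Where your argument breaks.} You reduce to bounding $\exp[\beta_n C_{\tl}]\cdot f_{\tl}^2/(n-L)!$ summed over $\tl$ with $\tl_1>K$. For $\tl_1=k$ in the range $(C_0\sqrt n, n^{1-\eps}]$ you propose to rely on the crude bound $f_{\tl}^2/m!\le(e\sqrt m/k)^{2k}=\exp[-2k\log(k/(e\sqrt m))]=\exp[-O(k\log n)]$. But the tilt can be as large as $\exp[\beta_n\cdot k(n-L)/2]$ with $\beta_n=\Theta(\log^2 n/n)$, i.e.\ $\exp[\Theta(k\log^2 n)]$, which \emph{dominates} the crude Plancherel cost (the exponent gain is $\log^2 n$ versus a loss of only $\log n$). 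Taking $C_0$ large does not help: one would need $\log(C_0/e)\gtrsim\log^2 n$, i.e.\ $C_0\gtrsim\exp[\Theta(\log^2 n)]$, not a constant. Nor can Theorem~\ref{thm:deviationContents} rescue this range: its hypothesis $\max(\tl_1,\ell(\tl))\le c\sqrt n$ forces $c\gtrsim k/\sqrt n\to\infty$, and the constant $B$ in that theorem degrades polynomially in $c$ (the proof yields $B\propto c^{-8}$), so the resulting bound $\exp[-B n t^6]$ is vacuous precisely when $k\gg\sqrt n$. The point you implicitly rely on — that partitions realising the extremal content $C_{\tl}\approx k(n-L)/2$ (rectangles) have far smaller Plancherel weight than the crude bound records — is true, but the crude bound does not capture this extra cost, and no quoted result does so uniformly for unbounded $c$. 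A further minor issue: even in the moderate range $\tl_1\in(K, C_0\sqrt n]$, applying Theorem~\ref{thm:deviationContents} requires \emph{both} $\tl_1\le c\sqrt n$ and $\ell(\tl)\le c\sqrt n$, but the only a priori bound on $\ell(\tl)$ at this stage is $n^{1-\eps}$ (Lemma~\ref{lemma:lengthbd}), so the two halves of the proposition cannot each assume the other. The paper's size-comparison device avoids all of this because the content factor drops out before any Plancherel-level deviation bound is invoked.
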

 Our proof relies on comparisons between partitions of $n$ and partitions of $n-1$. 
\begin{lemma} \label{lem:compare_normalizations}
Uniformly for
$L \in \intvl$,  we have
\begin{equation} \label{eq:norm_ratio}
1- O(1/\log n)\leq \frac{Z_{n-1}[ L]}{Z_n[L]}\leq 1 + O(\log^2 n/n^\eps).
\end{equation}
\end{lemma}

\begin{proof}
We first bound $Z_{n-1}[ L]/Z_n[L]$ above. For partitions of $n-1$ whose first part is equal to $L$, the normalisation of the conditioned measure is 
\begin{equation}
Z_{n-1}[ L]  = \frac{1}{(n-1)!}\sum_{\substack{\mu \vdash n-1\\\mu_1 = L}} {f_{\mu}}^2 {C_\mu}^\ell =  \frac{1}{n!}\sum_{\substack{\mu \vdash n-1\\\mu_1 = L}} f_{\mu} {C_\mu}^\ell \sum_{\lambda: \mu \nearrow \lambda} f_\lambda,
\end{equation}
where the final equality comes from the normalisation of the Plancherel growth process, as given in Proposition~\ref{prop:pgp_norm}.
	This can equally be expressed as a sum over partitions of $n$ with first part equal to $L$; splitting the sum according to whether the additional box is on the first part or not, we have
\begin{align}
	Z_{n-1}[ L]  = \frac{1}{n!} \sum_{\substack{\lambda \vdash n\\ \lambda_1 = L}} f_{\lambda}  \sum_{\mu: \mu \nearrow \lambda, \mu_1 = L } f_\mu {C_\mu}^\ell + \frac{1}{n!}\sum_{\substack{\mu \vdash n-1\\\mu_1 = L}} f_{\mu^{1+}}f_{\mu} {C_\mu}^\ell 
\end{align}
where $\mu^{1+}$ denotes the partition $\mu^{1+} = (\mu_1 + 1, \mu_2, \mu_3 , \ldots)$. 

We start by considering the first term. By~\eqref{eq_exp_suppressed} we know that the contribution to $Z_{n-1}[L]$ from any partition with more than one big part will be very small, i.e.
\begin{align}\label{x0}
	Z_{n-1}[ L]\left(1+O\left(\exp(-n^{1-\eps}\right)\right)  &= \frac{1}{n!} \sum_{\substack{\lambda \vdash n\\ \lambda_1 = L\\\lambda_2<n^{1-\eps}}} f_{\lambda}  \sum_{\mu: \mu \nearrow \lambda, \mu_1 = L } f_\mu {C_\mu}^\ell \\ & + \frac{1}{n!}\sum_{\substack{\mu \vdash n-1\\\mu_1 = L}} f_{\mu^{1+}}f_{\mu} {C_\mu}^\ell . \notag
\end{align}
 Then, for $\mu \nearrow \lambda$, $\mu_1= \lambda_1 = L$ and $\mu_2 < n^{1-\eps}$, 
	the modulus of the content of the additional box will be at most $n^{1-\eps}$, and 
\begin{equation}\label{x1}
{C_\mu}^\ell = (C_\lambda + O(n^{1-\eps}))^\ell= {C_\lambda}^\ell\left(1 + O\left(\frac{n^{1-\eps}}{C_\lambda}\right)\right)^\ell .
\end{equation}
But for such a $\lambda$, by Lemma~\ref{lemma:factss}(\ref{f3}) then (\ref{f2})
\begin{equation}\label{x2}
   C_\lambda\geq \frac{L(L-1)}{2}-n-|C_{\lm}|\geq (1+o(1))\frac{L^2}{2},
\end{equation}
hence combining~\eqref{x1} and~\eqref{x2} we obtain
\begin{equation}\label{x3}
{C_\mu}^\ell = {C_\lambda}^\ell (1+O(\log^2 n /n^{\eps}) ).
\end{equation}
Then, applying this to~\eqref{x0} we have
\begin{align}
Z_{n-1}[ L]\left(1+O\left(\exp(-n^{1-\eps}\right)\right)  \leq  &\frac{1}{n!} \sum_{\substack{\lambda \vdash n\\ \lambda_1 = L}} f_{\lambda}  C_\lambda^\ell (1+O(\log^2 n /n^{\eps}) )\sum_{\mu: \mu \nearrow \lambda} f_\mu \\
&+ \frac{1}{n!}\sum_{\substack{\mu \vdash n-1\\ \mu_1 = L}} f_{\mu^{1+}}f_{\mu} {C_\mu}^\ell \notag
\end{align}
where we over-count by adding the partition $(L-1,\lambda_2, \lambda_3, \ldots)$  in the sum over $\{\mu : \mu \nearrow \lambda \}$ and re-adding partitions $\lambda$ with $\lambda_2\geq n^{-1\eps}$ to the first sum after applying~\eqref{x3}. Then, using the identity $\sum_{\mu: \mu \nearrow \lambda} f_\mu = f_\lambda$ from recursively counting SYT, we have
\begin{align} \label{eq:znl-intermediate-ratio}
Z_{n-1}[ L] \left(1+O\left(\exp(-n^{1-\eps}\right)\right) \leq  Z_{n}[ L] (1+O(\log^2 n/ n^{\eps}) ) 
+ \frac{1}{n!}\sum_{\substack{\mu \vdash n-1\\ \mu_1 = L}} f_{\mu^{1+}}f_{\mu} {C_\mu}^\ell. 
\end{align}
The second term on the right is finally estimated using the hook-length formula~\eqref{eq:hooklength}. For $\mu \vdash n-1$, $\mu_1 = L$ and $\mu_2 \leq n^{1-\eps}$ we have 
	\begin{equation} \label{eq:boundfoneplus}
f_{\mu^{1+}} = n f_\mu  \exp \bigg[ \sum_{j = 1}^{\mu_2} \log \frac{L + \mu_j^\prime - j}{L+ \mu_j^\prime-j +1}   +  \sum_{j = \mu_2+1}^L \log \frac{L-j+1}{L-j+2} \bigg] = n f_\mu O (\log n / n)
\end{equation}
so the second term is just $Z_{n-1}[ L] O (\log n / n)$, which is absorbed into the left hand side of~\eqref{eq:znl-intermediate-ratio}, proving the upper bound in~\eqref{eq:norm_ratio}. 

The ratio is similarly bounded below by writing $Z_n[L]$ as a sum over partitions of $n-1$, to find 
\begin{align}
Z_{n}[ L]\left(1+O\left(\exp(-n^{1-\eps}\right)\right)  \leq &  \frac{1}{(n-1)!}\sum_{\substack{\mu \vdash n-1\\ \mu_1 = L}} {f_{\mu}}^2 {C_\mu}^\ell (1- O(\log^2n/n^\eps)) \\
&+  \frac{1}{n!}\sum_{\substack{\lambda \vdash n\\ \lambda_1 = L}} f_{\lambda^{1-}}f_{\lambda } {C_{\lambda }}^\ell \notag
\end{align}
where $\lambda^{1-}$  denotes the partition $(\lambda_1-1,\lambda_2, \lambda_3,\ldots)$.  Then we have 
\begin{align} \label{eq:boundfoneminus}
f_{\lambda ^{1-}} &= \frac{1}{n} f_\lambda   \exp \bigg[ \sum_{j = 1}^{\lambda_2} \log \frac{L + \lambda_j^\prime - j}{L+ \lambda_j^\prime-j -1}  +  \sum_{j = \lambda_2+1}^{L-1} \log \frac{L-j+1}{L-j} \bigg]  \\
&= \frac{1}{n} f_\lambda O (n / \log n) \notag
\end{align}
and finally $Z_{n-1}[ L]/Z_{n}[ L] \geq 1 - O(1/\log n)$, which concludes the proof.
\end{proof}

\begin{figure}
\centering
{\def\svgwidth{\sizeofpage}
\input{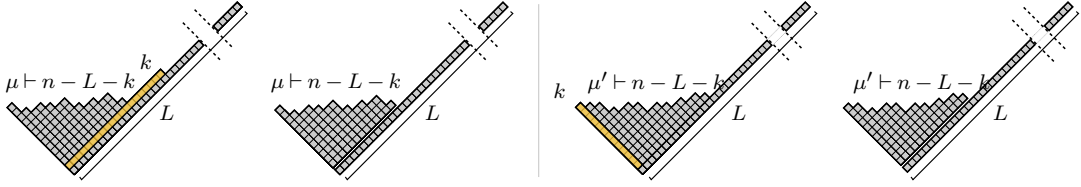} 
}
\caption{Left, partitions $L\sqcup k \sqcup \mu\vdash n$ and $L\sqcup  \mu \vdash n-k$, as used in the proof of Proposition~\ref{prop_bounding_box} to bound the second part $\lambda_2$. Right, partitions $L\sqcup (k \sqcup \mu)^\prime \vdash n$ and $L\sqcup  \mu^\prime \vdash n-k$ with a box removed from each as used to bound the length $\ll$.} \label{fig:remove-2nd-part}
\end{figure}

We are now ready to prove Proposition~\ref{prop_bounding_box}.
\begin{proof}[Proof of Proposition~\ref{prop_bounding_box}]
Under the Plancherel--Hurwitz measure  conditioned on the first part being $\lh_1 = L$ (with $L \in\intvl$), the distribution of the second part is
\begin{equation}
\P_{n,\ell}^+(\lh_2 = k| \lh_1 = L) = \frac{1}{n! Z_n[L]} \sum_{\substack{\mu \vdash n - L - k\\ \mu_1 \leq k}} f^2_{L \sqcup k \sqcup \mu} C^\ell_{L \sqcup k \sqcup \mu}. 
\end{equation}
Comparing SYT of shape $L \sqcup k \sqcup \mu \vdash n$ with ones of shape $L \sqcup \mu \vdash n -k$, obtained by removing the second part, we have a rough upper bound of
\begin{equation}
f_{L \sqcup k \sqcup \mu } \leq \binom{n}{k} f_{L \sqcup \mu}
\end{equation}
by over-counting the ways the boxes of the second part could be labelled. The content-sum of each of these partitions are related by
\begin{equation}
C_{L \sqcup k \sqcup \mu } = C_{L\sqcup \mu} - |\mu| + \frac{k(k-3)}{2} = C_{L\sqcup \mu}(1+o(1))
\end{equation}
as long as $k\leq n^{1-\eps}$.
By Lemma~\ref{lemma:uniqueness} we may consider only partitions with $\lambda_2\leq n^{1-\eps}$, we thus get
	\begin{align}
\P_{n,\ell}^+(\lh_2 = k| \lh_1 = L) \leq &\frac{1}{n! Z_n[L]}\sum_{\mu \vdash n - L - k} \binom{n}{k}^2 f^2_{L \sqcup \mu} C^\ell_{L \sqcup \mu} (1+o(1)) \notag\\
&= \binom{n}{k}^2 \frac{(n-k)! }{n! }\frac{Z_{n-k}[ L]}{Z_n[L]}(1+o(1)).
\end{align}
Now, Lemma~\ref{lem:compare_normalizations} implies $\frac{Z_{n-k}[ L]}{Z_n[L]}=(1+o(1))^k=e^{o(k)}$ , hence (by the inequality $\frac{1}{k!}\leq (e/k)^{k} $)
\begin{equation}
\P_{n,\ell}^+(\lh_2 = k| \lh_1 = L) \leq \frac{n!}{k!^2(n-k)!}e^{o(k)}\leq \frac{n^k}{(k/e)^{2k}} e^{o(k)},
\end{equation}
where the little-o is uniform for $L\in\intvl$. 
Hence
\begin{equation}
\forall \epsilon>0, \qquad 
\P_{n,\ell}^+(\lh_2 = (1+ \epsilon)e \sqrt{n}|\lh_1 = L) \leq  (1+\eps)^{-2e\sqrt{n} + o(\sqrt{n})},
\end{equation}
with the same uniformity.
From Theorem~\ref{thm:limitshape}, part (i), this implies that
 $\lambda_2 \leq e(1+o_p(1)) \sqrt{n}$ as $n\to\infty$.

The proof that $\ell(\lh) \leq e(1+o_p(1)) \sqrt{n}$ works exactly the same way, except that, instead of removing the second part, one removes one box from each part except the first one (see Figure~\ref{fig:remove-2nd-part}), which explicitly means comparing partitions $L \sqcup (k \sqcup \mu)^\prime$ and $L \sqcup \mu^\prime$ to find, by Lemma~\ref{lemma:lengthbd}, 
\begin{align}
\P^+_{n,\ell}(\ell(\lh) = k+1| \lh_1 = L) =& \frac{1}{n! Z_n[L]} \sum_{\substack{\mu \vdash n - L - k\\ \mu_1 \leq k}} f^2_{L \sqcup (k \sqcup \mu)^\prime} C^\ell_{L \sqcup (k \sqcup \mu)^\prime} \notag \\
\leq& \frac{1}{n! Z_n[L]}\sum_{\mu \vdash n - L - k} \binom{n}{k}^2 f^2_{L \sqcup \mu} C^\ell_{L \sqcup \mu} (1+o(1))
\end{align}
leading to the upper bound by precisely the same steps as for the second part.
\end{proof}

\subsection{Final bounding box for \texorpdfstring{$\tilde{\lh}$}{ltilde}}

Thanks to the previous results we can now focus (for $n$ large enough) on partitions $\lambda = L \sqcup \tilde{\lambda}$ such that $\lambda_2,\ell(\lambda)\leq 2e\sqrt n$, which by Lemma~\ref{lemma:factss}~(i) implies $C_\tl\leq en^{3/2}$.

\begin{prop}
We have 
\begin{equation}\label{eq_proba_rest}
\P_{n,\ell}^+((\lh_2,\lh_3,\ldots) = \tl|\lh_1 = L)\leq\exp\left[O\left(\frac{C_\tl\log^2 n }{n}+\log^2 n\right)\right]\frac{(f_\tl)^2}{(n-L)!}
\end{equation}
uniformly for $L\in\intvl$ and all $\tl\vdash n-L$ satisfying $C_\tl\leq en^{3/2}$.
\end{prop}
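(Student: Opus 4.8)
\emph{Strategy.} Condition on $\lone = L$ and write $\lambda = L \sqcup \tl$. Since $C_\lambda > 0$ for every partition of $n$ with first part $L\in\intvl$ carrying positive mass, $\P^+_{n,\ell}(\tl\mid\lone = L) = \frac{\frac1{n!}f_\lambda^2 C_\lambda^\ell}{Z_n[L]}$ up to a factor $1+o(1)$, and by Proposition~\ref{prop_bounding_box} and Lemma~\ref{lemma:lengthbd} the normalisation $Z_n[L]$ is, up to $1+o(1)$, the same sum restricted to ``good'' partitions $L\sqcup\nu$ — those with $\nu_1,\ell(\nu)\le 2e\sqrt n$, hence $C_\nu\le en^{3/2}$. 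The plan is to factor $\frac1{n!}f_\lambda^2 C_\lambda^\ell$ as $\frac{f_\tl^2}{(n-L)!}$ times a $\tl$-independent constant times $\exp[O(C_\tl\log^2 n/n)]$; the constant then cancels against $Z_n[L]$.

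\emph{Hook-length surgery.} The key step is to strip off the long first row. A box $\square_{i,j}$ of $\lambda$ in a row $i\ge 2$ is a box of $\tl$ sitting in position $(i-1,j)$; its arm in $\lambda$ is unchanged, while its leg gains exactly one box from the added first row, which precisely offsets the downward shift $i\mapsto i-1$, so that $\eta_\lambda(\square_{i,j}) = \eta_\tl(\square_{i-1,j})$. Consequently $\prod_{\square\in\lambda}\eta_\lambda(\square) = \Pi(\tl)\prod_{\square\in\tl}\eta_\tl(\square)$ with $\Pi(\tl) := \prod_{j=1}^{L}(L-j+1+\tl'_j)$ the product of the first-row hook lengths, and~\eqref{eq:hooklength} gives
\begin{equation}
\frac1{n!}f_\lambda^2 = \frac{n!}{((n-L)!)^2}\cdot\frac{f_\tl^2}{\Pi(\tl)^2}.
\end{equation}
In parallel, Lemma~\ref{lemma:factss}(iii) applied with a single big part gives $C_\lambda = A + C_\tl$ with $A := \tfrac{L(L-1)}{2} - (n-L)$ independent of $\tl$; uniformly for $L\in\intvl$, $A = \Theta(n^2/\log^2 n)$, so $A\gg|C_\tl|$ and $\ell/A = O(\log^2 n/n)$.

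\emph{The two $\tl$-dependent corrections.} For the content factor, $C_\lambda^\ell = A^\ell\exp[\ell\log(1+C_\tl/A)]$, and since $|C_\tl/A| = O(\log^2 n/\sqrt n) = o(1)$ uniformly, $\ell\log(1+C_\tl/A) = (1+o(1))\ell C_\tl/A = O(C_\tl\log^2 n/n)$. For the hook factor, write $\Pi(\tl) = L!\prod_{j=1}^{L}\bigl(1 + \tfrac{\tl'_j}{L-j+1}\bigr)$; since every part of $\tl$ is at most $2e\sqrt n \ll L$, all the ratios $\tl'_j/(L-j+1)$ are $o(1)$ (and vanish for $j>\tl_1$). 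A second-order expansion of the logarithm, together with $\sum_j\tl'_j = n-L$, the identities $\sum_j(j-1)\tl'_j = \sum_i\binom{\tl_i}{2}$ and $\sum_j\binom{\tl'_j}{2} = \sum_i(i-1)\tl_i$, and $\sum_i\binom{\tl_i}{2} - \sum_i(i-1)\tl_i = C_\tl$, shows that the $\tl$-dependence of $\log\Pi(\tl)$ collapses into a $\tl$-independent part plus a remainder of order $O(C_\tl\log^2 n/n)$; hence $\Pi(\tl)^{-2} = (\text{$\tl$-independent})\cdot\exp[O(C_\tl\log^2 n/n)]$.

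\emph{Conclusion and the main difficulty.} Collecting the factors, $\frac1{n!}f_\lambda^2 C_\lambda^\ell = E\cdot\frac{f_\tl^2}{(n-L)!}\exp[O(C_\tl\log^2 n/n)]$ with $E = E(n,L)$ independent of $\tl$, so dividing by $Z_n[L] = (1+o(1))\sum_{\nu\text{ good}}\frac1{n!}f_{L\sqcup\nu}^2 C_{L\sqcup\nu}^\ell$ makes $E$ disappear and yields
\begin{equation}
\P^+_{n,\ell}(\tl\mid\lone=L) = \frac{f_\tl^2}{(n-L)!}\cdot\frac{\exp[O(C_\tl\log^2 n/n)]}{\sum_{\nu\text{ good}}\frac{f_\nu^2}{(n-L)!}\exp[O(C_\nu\log^2 n/n)]}\,(1+o(1)).
\end{equation}
It remains to control the denominator, i.e. to show that the Plancherel average $\sum_{\nu\text{ good}}\P_{n-L,0}(\nu)\exp[O(C_\nu\log^2 n/n)]$ contributes no more than what is already in the stated error. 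This is where the content-deviation bound of Theorem~\ref{thm:deviationContents} is used: one splits the sum over $\nu$ into dyadic ranges of $|C_\nu|$, bounds the tilt $\exp[O(C_\nu\log^2 n/n)]$ crudely on each range, and uses Theorem~\ref{thm:deviationContents} — with $f_\nu = f_{\nu'}$ and $C_{\nu'} = -C_\nu$ to handle the negative side by conjugation — to show the contribution of the ranges $|C_\nu| \gg n^{3/2}t$ decays like $\exp[-\Omega(nt^6)]$, dominating the crude tilt. Matching this tail estimate against the growth of the tilt is the main obstacle; the hook-length surgery, though the conceptual heart, is short once the cancellation $\eta_\lambda(\square_{i,j}) = \eta_\tl(\square_{i-1,j})$ is noticed.
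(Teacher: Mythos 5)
Your hook‐length surgery is a genuinely different (and cleaner) route from the one the paper takes. The identity $\eta_\lambda(\square_{i,j}) = \eta_\tl(\square_{i-1,j})$ for $i\ge 2$ is correct, as is the resulting exact factorisation $\frac1{n!}f_\lambda^2 = \frac{n!}{((n-L)!)^2}\frac{f_\tl^2}{\Pi(\tl)^2}$, and your second-order expansion of $\log\Pi(\tl)$ correctly identifies $C_\tl$ as the $\tl$-dependent coefficient (in fact the contribution is $O(C_\tl\log^2 n/n^2)+o(1)$, a full power of $n$ smaller than the content factor's $\Theta(C_\tl\log^2 n/n)$, so it is dominated). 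The paper instead works with the crude one-sided bounds $f_\lambda\le\binom{n}{L}f_\tl$ and $C_\lambda\le\frac{L^2}{2}+C_\tl$ from Lemma~\ref{lemma:factss}(i)-(ii), pulls out $\left(\frac{L^2}{2}\right)^\ell$ from the content factor, and arrives at the same $\tl$-independent prefactor $E(L)$ times $\exp[\Theta(C_\tl\log^2 n/n)]\frac{f_\tl^2}{(n-L)!}$. Both computations lead to the same place; yours trades a little more algebra for exact equalities.

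Where you diverge substantively is on the normalisation $Z_n[L]$. The paper's treatment is deliberately minimal: it exhibits a single shape $L\sqcup\tl$ with $C_\tl>0$ and $f_\tl$ maximal to conclude $Z_n[L]\gtrsim E(L)$, which suffices because the proposition is invoked downstream only as an upper bound on $\P^+_{n,\ell}(\tl\mid\lone=L)$ (see the sum over $\firstsetm{\eps}$ in the proof of Theorem~\ref{thm:limitshape}(iii)). Your plan to also bound the denominator $\sum_{\nu}\exp[O(C_\nu\log^2 n/n)]\P_{n-L,0}(\nu)$ from above — the dyadic decomposition of $|C_\nu|$ combined with Theorem~\ref{thm:deviationContents} — is aiming at a genuinely stronger statement, and I do not think it can deliver a $\Theta(1)$ constant: the tilt $\exp[\Theta(C_\nu\log^2 n/n)]$ is already super-constant on the range $|C_\nu|\sim n^{3/2}\cdot n^{-1/12}$ where $\P_{n-L,0}$ still has non-negligible mass, and balancing $\exp[sn^{3/2}t]$ against $\exp[-Bnt^6]$ only yields a bound of the form $\exp[O(n^{2/5}\mathrm{polylog}\,n)]$. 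That slack is $\exp[o(\sqrt n)]$, which is harmless for the application, but it is not $\Theta(1)$. The dyadic-range argument you sketch is, in substance, what the paper carries out \emph{later} (in the proof of Theorem~\ref{thm:limitshape}(iii), where it is applied to the restricted sum over $\firstsetm{\eps}$), not here, and there it only needs to beat a target of $\exp[-K\sqrt n]$, for which this works. So your ``main difficulty'' is real, but it belongs to the next step, not to this proposition; for this proposition a one-line lower bound on $Z_n[L]$ is all that is required.
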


\begin{proof}
Take any $\lambda$ satisfying the conditions above. It is easily shown from the hook-length formula that
\begin{equation}\fl\leq \binom{n}{L}f_\tl 
\end{equation}
and also
\begin{equation}\Cl\leq \frac{L(L-1)}{2}+C_\tl = \frac{L(L-1)}{2}\left(1+\frac{2 C_\tl}{L(L-1)}\right).\end{equation}
Hence
\begin{equation}
\Zl\leq  \frac{1}{n!}\binom{n}{L}^2f_\tl^2\left(\frac{L(L-1)}{2}\right)^\ell \left(1+\frac{2 C_\tl}{L(L-1)}\right)^\ell,
\end{equation}
and using the bound $(1+x)^k \leq e^{x k}$ along with the estimates $\ell = O(n)$ and $1/L = O(\log n/n)$ we have
\begin{equation}
\Zl \leq \frac{(n-L)!}{n!}\binom{n}{L}^2\left(\frac{L(L-1)}{2}\right)^\ell \exp\left[O\left(\frac{C_\tl \log^2 n }{n}\right)\right]\frac{(f_\tl)^2}{(n-L)!}. \label{st1}
\end{equation}

Let us now bound the normalisation factor $Z_{n}[L]$ below. 
Let \begin{equation}
    \Lambda_L:=\{\mu\vdash n-L|C_\mu\geq 0\text{ and } \mu_1,\ell(\mu)\leq 3\sqrt n\}. 
    \end{equation}
Take $\lambda=L\sqcup \mu$ with $\mu\in \Lambda_L$. In the rest of the proof, uniformity will also be over $\mu\in \Lambda_L$.
Then, by Lemma~\ref{lemma:factss}-(iii),
\begin{equation}\label{o}
    C_\lambda\geq \frac{L(L-1)}{2}-(n-L)=\frac{L(L-1)}{2}\left(1+O\left(\frac{\log^2 n}{n}\right)\right)
\end{equation}
and, by the hook-length formula,
\begin{align}
 f_\lambda=\frac{n!}{(n-L)!}\frac{1}{(L-\mu_1)!}\frac{1}{\prod_{i=1}^{\mu_1}(\mu_i'+L-i)} f_\mu.
\end{align}
But 
\begin{align}
   {\prod_{i=1}^{\mu_1}(\mu_i'+L-i)}&\leq \prod_{i=1}^{\mu_1}(L-i +3\sqrt n) \notag\\
   &=\prod_{i=1}^{\mu_1}(L-i)(1 +\frac{3\sqrt n}{L-i}) \notag\\
   &\leq \prod_{i=1}^{\mu_1}(L-i)(1 +\frac{4 \sqrt n}{L})\quad \text{for $n$ large enough} \notag\\
   &= \exp(O(\log n))\prod_{i=1}^{\mu_1}(L-i).
\end{align}
Hence
\begin{equation}
    f_\lambda\geq \binom n L f_\mu \exp(O(\log n)).
\end{equation}

By Theorem~\ref{thm:vkls} (together with the fact that $\P_{n,0}(\Cl\geq 0)\geq 1/2)$), we have 
\begin{equation}
    \sum_{\mu\in \Lambda_L}f_\mu^2\geq \left(\frac 1 2 -o(1)\right)(n-L)!
\end{equation}
hence
\begin{equation}
   \sum_{\substack{\lambda=L\sqcup\mu\\\mu\in \Lambda_L}  }f_\lambda^2\geq{(n-L)!}\binom{n}{L}^2 \exp(O(\log n)),
\end{equation}
and therefore, using~\eqref{o},
\begin{equation}\label{st2}
    \Z_n[L]\geq \Z_n(\{\lambda=L\sqcup \mu|\mu\in\Lambda_L\})\geq \frac{(n-L)!}{n!}\binom{n}{L}^2\left(\frac{L(L-1)}{2}\right)^\ell \exp(O(\log^2 n)).
\end{equation}

Combining~\eqref{st1} and~\eqref{st2} proves~\eqref{eq_proba_rest}.
The uniformity condition is easily checked.\end{proof}

We can now prove the last part of Theorem~\ref{thm:limitshape}.
\begin{proof}[Proof of Theorem~\ref{thm:limitshape}, part~(\ref{thm:lst_2_sqrtn})]
Set $L \in \intvl$, $m=n-L$ and $\eps>0$.

Let us now introduce $\mathcal Z_m=\{\mu\vdash m|\mu_1,\ell(\mu)\leq 3\sqrt m\}$,  $\contentsetm{\leq t}=\{\mu\in\mathcal Z_m  |C_\mu\leq m^{3/2}t\} $ and respectively $\contentsetm{\geq t}=\{\mu\in \mathcal Z_m|C_\mu\geq m^{3/2}t\}$), and $\firstsetm{\eps}=\{\mu\in \mathcal Z_m |\max(\mu_1,\ell(\mu))\geq (2+\eps)\sqrt m\}.$ 

In accordance with \eqref{eq_proba_rest}, we will show that for any constant $C$, uniformly in $C$ and $L$, 
\begin{equation}\label{a}
\sum_{\mu\in \firstsetm{\eps}} \exp\left(C\left(\frac{ C_\mu \log^2 n}{n}+\log^2 n\right)\right)\frac{(f_\mu)^2}{m!}\leq \exp[-(1+o(1))K\sqrt n]
\end{equation}
where $K$ is the constant of Proposition~\ref{prop:DZ}.

Let us fix $t=\frac 1 {\log^3 n}$. To establish this inequality we will split the sum into two sums, over $\firstsetm{\eps}\cap \contentsetm{\leq t}$ and $\firstsetm{\eps}\cap \contentsetm{\geq t}$.

For all $\mu \in \firstsetm{\eps}\cap \contentsetm{\leq t}$, we have $\frac{C_\mu \log^2 n }{n}+\log^2 n=o(\sqrt n)$ uniformly in $\mu$ hence by Proposition~\ref{prop:DZ}, we have 

\begin{equation}\label{a1}
\sum_{\mu\in \firstsetm{\eps}\cap \contentsetm{\leq t}} \exp\left(C\left(\frac{ C_\mu \log^2 n}{n}+\log^2 n\right)\right)\frac{(f_\mu)^2}{m!}\leq \exp[-(1+o(1))K\sqrt n].
\end{equation}

On the other hand, for all $\mu \in \firstsetm{\eps}\cap \contentsetm{\geq t}$, we have both $C_\mu =O(n^{3/2})$ and, by Theorem~\ref{thm:deviationContents} (very roughly), $\frac{(f_\mu)^2}{m!}\leq \exp(-n^{0.9})$. Since also $|\firstsetm{\eps}\cap \contentsetm{\geq t}|=e^{O(\sqrt n)}$, we have

\begin{equation}\label{a2}
\sum_{\mu\in \firstsetm{\eps}\cap \contentsetm{\geq t}} \exp\left(C\left(\frac{ C_\mu \log^2 n}{n}+\log^2 n\right)\right)\frac{(f_\mu)^2}{m!}\leq \exp[-(1+o(1))n^{0.9}].
\end{equation} 

Putting \eqref{a1} and \eqref{a2} together establishes \eqref{a}.

Now, by Proposition~\ref{prop_bounding_box}, we have $\tilde{\lh}\in \mathcal{Z}_m$ w.h.p., hence we can combine~\eqref{a} with~\eqref{eq_proba_rest} to establish that for all $\eps>0$ there is $K>0$ such that
\begin{equation}\P_{n,\ell}^+(\max(\lh_2,\ell(\lh))\geq (2+\eps)\sqrt n|\lone)\leq \exp(-(1+o(1))K\sqrt n)\end{equation}
for all $L\in \intvl$, hence since $\lh_1$ is in this interval with probability one we can remove the conditioning and get 
\begin{equation}\P_{n,\ell}^+(\max(\lh_2,\ell(\lh))\geq (2+\eps)\sqrt n)\leq \exp(-(1+o(1))K\sqrt n).\end{equation}

Together with~\eqref{eq_lowr_bound_second_part}, this directly implies that
\begin{equation}\frac{\lh_2}{\sqrt n}\xrightarrow{p} 2\quad \text{and}\quad \frac{\ell(\lh)}{\sqrt n}\xrightarrow{p} 2\end{equation}
which is what we wanted to show.
\end{proof}

\section{The associated map model} \label{sec:mapsproofs}

In this section, $\H_{n,\ell}$ denotes the set of not necessarily connected pure Hurwitz maps on $n$ vertices and $\ell$ edges (we recall that $|\H_{n,\ell}| = H_{n,\ell}$), and $\mathbf H_{n,\ell}$ denotes a random uniform element of $\H_{n,\ell}$.

Recall that by Lemma~\ref{lem:compare_normalizations} we have, uniformly for $L\in\intvl$,
\begin{equation} \label{eq:norm_ratio_best}
\frac{Z_{n-1}[ L]}{Z_n[L]}\geq 1 - O(1/\log n).
\end{equation}

This immediately implies that, in the associated map model, most vertices are isolated.

\begin{lemma}\label{lem_isolated_vertices}
There are $n-O_p( n /\log n)$ isolated vertices in $\mathbf H_{n,\ell}$.
\end{lemma}

\begin{proof}
Choose a vertex of $\mathbf H_{n,\ell}$. The probability that this vertex is isolated is $H_{n-1,\ell}/H_{n,\ell}$, i.e. the fraction of elements of $\mathcal{H}_{n,\ell}$ remaining when a vertex is removed.   Hence, the expected number of isolated vertices in $\mathbf H_{n,\ell}$ is  $n H_{n-1,\ell}/H_{n,\ell}$. 
From~\eqref{eq_exp_suppressed}, we have (recalling the notation introduced at~\eqref{eq:znLnotation} and that $\eps = \frac{1}{100}$)  that the contribution from any partitions without $\lone$ will be exponentially suppressed, we have
\begin{equation}
    H_{n,\ell} = Z_n[L](1+ O(\exp(-n^{1-\eps}) )
\end{equation}
for some $L  \in\intvl$. Then, applying~\eqref{eq:norm_ratio_best}, this expectation is
\begin{equation}
    \frac{n H_{n-1,\ell}}{H_{n,\ell}} = \frac{nZ_{n-1}[L]}{Z_n[L]} (1+ O(\exp(-n^{1-\eps}) ) = n-O( n /\log n)
\end{equation}
where we replace the inequality in~\eqref{eq:norm_ratio_best} by the fact that the number of isolated vertices is at most $n$. From the Markov inequality, the probability that the number of isolated vertices in a uniform random element of $\mathcal{H}_{n,\ell} $ is greater than this expectation tends to 1 as $n$ tends to infinity, as required.
\end{proof}

We are now ready to prove Theorem~\ref{thm:nogiantcomp}.

\begin{proof}[Proof of Theorem~\ref{thm:nogiantcomp}]
Lemma~\ref{lem_isolated_vertices} immediately implies that all connected components of $\mathbf H_{n,\ell}$ have $O_p( n /\log n)$ vertices.

Let $\gamma <\gamma(\theta):=2^{2\theta-1}-1$. For the second part of the proof, we will show that there exists w.h.p. a component of $\mathbf H_{n,\ell}$ with at least $(\gamma(\theta)-o(1))\ell$ edges. To do so, we show that the number of maps with no connected component with more than $\gamma \ell$ edges is $o(H_{n,\ell})$, starting by estimating the number of these maps satisfying  some extra constraints.

Let $\mathfrak h\in \H_{n,\ell}$ be a map with $n/3$ isolated vertices and no connected component with more than  $n/3$ vertices, or more than $\gamma \ell$ edges. We partition the connected components of $\mathfrak h$ into two pure Hurwitz maps $\mathfrak h_1$ and $\mathfrak h_2$, where each of $\mathfrak h_1, \mathfrak h_2$ has less than $(1+\gamma)\ell/2$ edges and more than $n/3$ vertices (this is always possible since the $n/3$ isolated vertices can be distributed across the two maps any way we want).
Hence, if we let $H_{n,\ell}^{\leq \gamma}$ be the number of such maps, we have the following inequality:
\begin{equation}\label{eqY}
H_{n,\ell}^{\leq \gamma}\leq \sum_{\substack{n_1+n_2=n\\ n_1,n_2\geq n/3 }}\sum_{\substack{\ell_1+\ell_2=n\\ \ell_1,\ell_2\leq \frac{1+\gamma}2\ell}}\binom{n}{n_1}\binom{\ell}{\ell_1}H_{n_1,\ell_1}H_{n_2,\ell_2}
\end{equation}
(the binomials arise because we consider labeled objects).

We recall the asymptotic estimation of Theorem~\ref{thm:asymptotic_H_nl}, written in a convenient way for us in this proof. As long as $n,\ell\to \infty$ linearly in each other, we have
\begin{equation}\label{eqW}
\frac{H_{n,\ell}}{n!\ell!}=\exp(\ell \log \ell-n\log n-(1-\log 2)\ell+n+2\ell \log\log n+o(n))
\end{equation}
where the little-o is uniform for $\ell/n$ in any compact subset of $(0,\infty)$.

Now, take numbers $(n_1,n_2,\ell_1,\ell_2)$ as above. We can apply the asymptotic estimation \eqref{eqW} without restriction because $\frac{\ell_i}{n_i}\in \left[\frac{3(1-\gamma) }4\frac{\ell}{n},\frac{3(1+\gamma)}2\frac{\ell}{n}\right]$ and hence, for $i = 1,2$, $\ell_i,n_i\to\infty$ linearly in each other.
Therefore we obtain, since in this case $\log\log n_i=\log\log n+o(1)$ also holds,
\begin{equation}
\frac{\binom{n}{n_1}\binom{\ell}{\ell_1}H_{n_1,\ell_1}H_{n_2,\ell_2}}{H_{n,\ell}}\leq \frac{\exp\left[\ell_1\log\ell_1+\ell_2\log\ell_2-\ell\log\ell +o(n)\right]}{\exp\left[n_1\log n_1+n_2\log n_2-n\log n\right]}.
\end{equation}
Since $\ell_i\leq \frac{1+\gamma}2\ell$, we have 
\begin{equation}
(\ell_1\log\ell_1+\ell_2\log\ell_2-\ell\log\ell)\leq \ell\log\left(\frac{1+\gamma}2\right).
\end{equation}
On the other hand, we always have 
\begin{equation}
n_1\log n_1+n_2\log n_2-n\log n\geq -n\log 2,
\end{equation}
hence
\begin{equation}
\frac{\binom{n}{n_1}\binom{\ell}{\ell_1}H_{n_1,\ell_1}H_{n_2,\ell_2}}{H_{n,\ell}}\leq \exp\left[n\log 2+\ell\log\left(\frac{1+\gamma}2\right)+o(n)\right],
\end{equation}
which is exponentially small in $n$. Plugging this into~\eqref{eqY} (which has a polynomial number of summands) one obtains $H_{n,\ell}^{\leq \gamma}=o\left(H_{n,\ell}\right)$.

Let $\mathbf{E}^{<\gamma}$ denote the event that all components of $\mathbf{H}_{n,\ell}$ have less than $\gamma \ell$ edges and let $\mathbf{E}_{n/3}$ denote the even that $\mathbf{H}_{n,\ell}$ contains at least $n/3$ isolated vertices. We have
\begin{align}
    \P(\mathbf{E}^{<\gamma})=\P(\mathbf{E}^{<\gamma} \cap \mathbf{E}_{n/3})+o(1)=\frac{H_{n,\ell}^{\leq \gamma}}{H_{n,\ell}}+o(1)
    =o(1),
\end{align}
where in the first equality we use Lemma~\ref{lem_isolated_vertices}.

This shows that w.h.p., a uniform map of $\H_{n,\ell}$ has a component with more than $(\gamma(\theta)-o(1))\ell$ edges.
\end{proof}

\begin{acks}[Acknowledgments]
We thank Philippe Biane, Jérémie Bouttier and Andrea Sportiello for insightful conversations.

\end{acks}

\begin{funding}
This project has received funding from the European Research Council (ERC) under the European Union’s Horizon 2020 research and innovation programme (grant agreement No. ERC-2016-STG 716083 “CombiTop”). BL is supported by the Knut and Alice Wallenberg foundation. GC is supported by the grant ANR-19-CE48-0011 ``COMBINÉ'', GC and HW by the grant ANR-18-CE40-0033 ``DIMERS''.
\end{funding}
 
\bibliographystyle{imsart-number}
\bibliography{hurwitz}

\end{document}